\renewcommand\eqref[1]{(\ref{#1})} 
 \newtheorem{thm}{Theorem}[section]
 \newtheorem{cor}[thm]{Corollary}
 \newtheorem{lem}[thm]{Lemma}
 \newtheorem{prop}[thm]{Proposition}
 \theoremstyle{definition}
 \newtheorem{defn}[thm]{Definition}
 \theoremstyle{remark}
 \newtheorem{rem}[thm]{Remark}
 \newtheorem{ex}[thm]{Example}
 \numberwithin{equation}{section}
\newcommand{\half}{\frac{1}{2}}
\newcommand{\ene}{\mathbb{N}}
\newcommand{\LL}{\mathcal{L}}
\newcommand{\er}{\mathbb{R}}
\newcommand{\ar}{\mathbb{R}}
\newcommand{\ce}{\mathbb{C}}
\newcommand{\zet}{\mathbb{Z}}
\newcommand{\T}{\mathbb{T}^1}
\newcommand{\fou}{\mathcal{F}}
\newcommand{\bi}{\begin{itemize}}
\newcommand{\ei}{\end{itemize}}
\newcommand{\be}{\begin{enumerate}}
\newcommand{\ee}{\end{enumerate}}
\newcommand{\beq}{\begin{equation}}
\newcommand{\eq}{\end{equation}}
\def\Op{{{\rm Op}}}
\DeclareMathOperator{\Tr}{Tr}
\DeclareMathOperator{\Det}{Det}
\def\Rn{{{\mathbb R}^n}}
\def\Tn{{{\mathbb T}^n}}
\def\Zn{{{\mathbb Z}^n}}
\def\T{{{\mathbb T}^1}}
\def\N{{{\mathbb N}}}
\def\C{{{\mathbb C}}}
\def\SU2{{{\rm SU(2)}}}
\def\SO3{{{\rm SO(3)}}}
\def\lapsu2{{{\mathcal L}_\SU2}}
\def\lapsu2{{{\mathcal L}_\SU2}}
\def\Re{{{\rm Re}\,}}
\def\Op{\text{\rm Op}}
\DeclareMathOperator{\Hi}{{\bf H}}
\DeclareMathOperator{\Hcal}{\bf H}
\begin{document} 

%
%
%
%
%
%
%
%
%
\title[Schatten-von Neumann classes of tensors ]
 {Schatten-von Neumann classes of  tensors of invariant operators }

\author{Julio Delgado}

\address{Universidad del Valle\\
Departamento de Matematicas\\
Calle 13 100-00\\
Cali\\
Colombia}

\email{delgado.julio@correounivalle.edu.co}

\author{Liliana Posada}

\address{Universidad del Valle\\
Departamento de Matematicas\\
Calle 13 100-00\\
Cali\\
Colombia}

\email{liliana.posada@correounivalle.edu.co}

\author[Michael Ruzhansky]{Michael Ruzhansky}

\address{%
		Ghent University\\
	Department of Mathematics: Analysis, Logic and Discrete Mathematics\\
	Krijgslaan 281, Building S8 \\
	B 9000 Ghent\\
	Belgium}

\email{Michael.Ruzhansky@ugent.be}

\address{%
	Queen Mary University of London
	School of Mathematical Sciences\\
	Mile End Road\\
	London E1 4NS\\
	United Kingdom}
\email{m.ruzhansky@qmul.ac.uk}

\thanks{The authors are supported by the FWO Odysseus 1 grant G.0H94.18N: Analysis and Partial Differential Equations and by the Methusalem programme of the Ghent University Special Research Fund (BOF) (Grant number 01M01021). Michael Ruzhansky is also supported by FWO Senior Research Grant G011522N and EPSRC grants EP/R003025/2 and EP/V005529. The first and second authors were also supported by Vic Inv Universidad del Valle CI-71352.}

\subjclass[2020]{Primary 47B10; Secondary 47A80}

\keywords{Tensor products, Schatten-von Neumann norms,  eigenvalues}

\date{\today}
\begin{abstract}
In this work we study Schatten-von Neumann classes of tensor products of invariant operators on  Hilbert spaces. In the first part we  first deduce some spectral properties for tensors of anharmonic oscillators thanks to the knowledge on corresponding Schatten-von Neumann properties. In the second part we specialised on tensors of invariant operators. In the special case where a suitable Fourier analysis  associated to a fixed partition of a Hilbert space into finite dimensional subspaces  is available we also give the corresponding formulae in terms of symbols. We also give a sufficient condition for Dixmier traceability for a class of finite tensors of pseudo-differential operators on the flat torus. 
\end{abstract}

\maketitle

\section{Introduction}
The notion of finite tensors of Hilbert spaces appears for the first time in the works of  Murray and von Neumann in \cite{Murray}. The more general case of infinite tensors was extended by von Neumann in \cite{Ne}.  A recent account on these tensors can be found in  \cite{weav:book}.\\

The trace class is a fundamental concept in quantum mechanics. A  {\em  density operator} or a {\em statistical operator} is a  positive self-adjoint trace class operator for a corresponding ensemble of quantum systems, thus a special element of $S_1(\Hi)$. On the other hand, when systems interact the space of states takes the form of a tensor of Hilbert space, that is, a tensor of spaces of states.  The Schatten-von Neumann class  $S_2(\Hi)$  is also known as the class of Hilbert-Schmidt operators. In the physics  literature, $S_2(\Hi)$ it is known as the {\em Liouville space} over $\Hi$. When dealing with coupled quantum systems or open quantum systems, the tensor products of Hilbert spaces arise. More specifically, if $\Hi_1, \Hi_2 $ are  Hilbert spaces and the state spaces of respective quantum systems, then the Hilbert tensor product  $\Hi_1\otimes\Hi_2$ is the state space of the corresponding coupled system. Hilbert tensor products of more factors are required  when the number of systems interacting increases.   An important application of Hilbert tensor products and Schatten-von Neumann classes naturally arises in the study of quantum channels in Quantum Information Theory. The use of Hilbert tensors in Quantum Mechanics goes back to the beginnings of its  mathematical foundations.  \\

The Schatten-von Neumann norms have been recently applied for measurements of quantum statistical speed. In \cite{mgas:sp}, one introduced the notion 
 of Schatten-von Neumann speed.  Therein, each Schatten-von Neumann norm is used to define a corresponding quantum  statistical distance which in turn induces a quantum statistical speed. The measurement of a quantum statistical speed quantifies the sensitivity of an initial state with respect to changes of the parameter of a dynamical evolution. The statistical speed of a quantum state can be interpreted as an observable witness for entanglement. Further applications of Schatten-von Neumann speeds can be found in \cite{schit:cc1}.\\


We will apply the Fourier analysis associated to a fixed partition of a Hilbert space into finite dimensional subspaces as introduced in \cite{fjdmr:foum}. From it one can define a notion of global symbol corresponding to a Fourier multiplier. We then consider tensors of Fourier multipliers and derive sharp Schatten-von Neumann properties and trace formulae  on Hilbert tensor products in terms of global symbols. We will also study some consequences for pseudo-differential operators on the torus that can be decomposed as a composition of a multiplication operator and a invariant operator(Fourier multiplier).\\

The authors have investigated different Schatten-von Neumann properties in previous works. Sharp sufficient conditions for kernels of integral operators on compact manifolds \cite{dr:suffkernel} and in other settings \cite {dr:intsc}, anharmonic oscillators \cite{anh:cdr},  Schatten-von Neumann classes and trace formulae for Fourier multipliers  on compact groups  \cite{dr13:schatten}, Grothendieck-Lidskii formulas and nuclearity in \cite{dr13a:nuclp}, Schatten-von Neumann properties for Fourier multipliers associated to partitions on Hilbert spaces \cite{fjdmr:foum}. The Dixmier trace and Wodzicki residue have been investigated for pseudo-differential oeprators on compact manifolds and compact Lie groups in \cite{cdc20}, \cite{ckc20}, \cite{cc20}.\\

In Section \ref{sec2} we review some basic definitions on Schatten-von Neumann Ideals and finite tensor products of Hilbert spaces. Section \ref{sec3} is devoted to the case of Schatten-von Neumann properties on finite  tensor products and the special case of the trace class. We deduce spectral properties for tensors of anharmonic oscillators and in particular for the rate of growth of the energy levels of the tensor.  In Section  \ref{sec4} we review the notion of global symbol for  a class of invariant operators with respect to a partition of a Hilbert space into finite dimensional subspaces in the sense of \cite{fjdmr:foum}. Therein we use the notion of the Fourier analysis associated to that kind  of partitions. In Section \ref{sec5}, we apply the notion of global symbol to the setting of  tensors of invariant operators.  We obtain some Schatten-von Neumann properties for tensors of invariant operators on compact Lie groups. At the end of the section we obtain a sufficient condition for Dixmier traceability for a class of finite tensors of pseudo-differential operators on the flat torus with symbols of the form $\sigma(x,j)=a(x)\beta(j)$ defined on $\Tn\times \Zn$. This class of symbols enjoy some special features for the analysis and have been the object of recent interest in the study under the form of the so-called  pseudo-differential neural operators  (cf. \cite{pnetw:ko}).  


 
\section{Tensor product of Hilbert spaces and Schatten-von Neumann ideals}\label{sec2}
In this section we recall the basic elements of  tensor products of Hilbert spaces and Schatten-von Neumann ideals. \\

Let $\Hi$ be a complex Hilbert space, we denote its inner product by $\langle \cdot,\cdot\rangle_{\Hi}$ or simply by $\langle \cdot,\cdot\rangle$ when there is not room for confusion. Since the main applications related to this work arise from  quantum mechanics, we adopt here the convention on the antilinearity on the first factor (or conjugate homogeneity) of the inner product, i.e., 
\[ \langle a\phi,\psi\rangle=\overline{a}\langle \phi,\psi\rangle,\]
 for all $\phi,\psi\in\Hi, a\in\ce.$\\

We will sometimes also use the Dirac's {\em Bra} and {\em Ket} notation. If $\psi$ is an element of the Hilbert space  $\Hi$ we will say that it is a {\em ket} and write $|\psi\rangle$. If  $\phi$ is an element of the Hilbert space  $\Hi$ we will associate to it an element of $\Hi^*$ that we call the Bra-$\phi$ denoted by  $\langle \phi|$. The  Bra-$\phi$ defines a
 linear form on $\Hi$ given by 
\[\langle \phi|(|\psi\rangle):=\langle \phi,\psi\rangle.\]
The notation can be simplified and justified by the Riesz representation Theorem writing
\[\langle \phi|\psi\rangle=\langle \phi,\psi\rangle.\]

The trace class is a crucial object to define several important concepts in the mathematical framework of quantum mechanics, and in particular it is instrumental  for the main applications that  will be considered herein. We now recall its definition. \\

Let $T:\Hi\rightarrow \Hi$ be an operator in $S_1(\Hi)$ and let  $\{\phi_k\}_k$ be any orthonormal basis for the Hilbert space $\Hi$. Then, the series $\sum\limits_{k=1}^{\infty}\langle \phi_k,T\phi_k\rangle_{\Hi}$ is absolutely convergent and the sum is independent of the choice of the orthonormal basis $\{\phi_k\}_k$. Thus, we can define the trace $\Tr (T)$ of any linear operator
$T:\Hi\rightarrow \Hi$ in $S_1(\Hi)$ by $$\Tr (T)=\sum\limits_{k=1}^{\infty}\langle \phi_k,T\phi_k\rangle_{\Hi},$$
where $\{\phi_k:k=1,2,\dots\}$ is any orthonormal basis for $\Hi$. \\


\subsection{Tensor products of Hilbert spaces and operators}
We shall now recall the definition and basic properties of tensor products of Hilbert spaces and operators which are key concepts in Quantum Mechanics. They arise when several quantum systems are involved and interact with each other. 

\begin{defn}
	Let $\Hi_1, \Hi_2$ be  two complex  Hilbert spaces. Let $\phi\in \Hi_1$ and $\psi\in \Hi_2$. We define 
	 $\phi\otimes\psi$ to be the bi-antilinear form on $ \Hi_1\times \Hi_2$ given by 
	 \[\phi\otimes\psi(\phi',\psi')=\langle \phi',\phi \rangle\langle \psi',\psi \rangle ,\]%


\vspace{0.5cm}

	Let $\mathcal{E}$ be the linear span of terms of the form $\phi\otimes\psi$, and we will denote it by $ \Hi_1\widetilde{\otimes} \Hi_2$. The vector space $\mathcal{E}=\Hi_1\widetilde{\otimes} \Hi_2$ can be endowed with the inner product  defined by 
	\begin{align*}\langle \phi\otimes\psi, \phi'\otimes\psi'\rangle&=\langle \phi,\phi' \rangle\langle \psi,\psi' \rangle\\
	&= \phi'\otimes\psi'(\phi,\psi).
		\end{align*}
  
In fact, if $u=\sum\limits_{i=1}^n \phi_i \otimes \psi_i \in \mathcal{E}$, then the norm $u$ induced for the product $\langle \cdot, \cdot\rangle$ is
\[\left\|\sum\limits_{i=1}^n \phi_i \otimes \psi_i\right\|=\left\{\sum\limits_{i=1}^n\sum\limits_{k=1}^n \langle \phi_i,\phi_k \rangle\langle \psi_i,\psi_k \rangle\right\}^{1/2}.\]
 
\end{defn}
One can show that the product $\langle \cdot, \cdot\rangle$ on $\mathcal{E}$ is well defined and positive definite.
 The space $(\mathcal{E}, \langle \cdot, \cdot\rangle)$ is then a pre-Hilbert space. 
  It is clear that we can extend the above definition of tensor product  to the case of a finite number of Hilbert spaces $\Hi_1, \Hi_2,\dots, \Hi_n$; getting again a  corresponding pre-Hilbert space  $(\mathcal{E}, \langle \cdot, \cdot\rangle)$.    
  The notion of Hilbert tensors was introduced by Murray and von Neumann in \cite{Murray} for finite tensors.

\begin{defn} The completion of $(\mathcal{E}, \langle \cdot, \cdot\rangle)$ is denoted by  $ \Hi_1\otimes\cdots \otimes \Hi_n$. 
	  The space $\Hi_1\otimes\cdots \otimes \Hi_n$ is called the {\em tensor product} of the  complex  Hilbert spaces $\Hi_1,\dots , \Hi_n$.
\end{defn}

We will denote by $\ene$ the set of natural numbers $\{1,2,\dots\}$ and by $\ene_0$ the set of natural numbers including $0$, i.e. $\ene_0 =\{0,1,2,\dots \}.$ The following theorem is well-known and is a basic property for the construction of a basis in tensor products.  
\begin{thm}\label{prhdes} Let $(e_k)_{k\in \mathcal{N}}$, $(f_{\ell})_{\ell\in \mathcal{M}}$ be orthonormal bases of the complex Hilbert spaces $\Hi_1$ and $\Hi_2$ respectively. Then, the family $(e_j\otimes f_{\ell})_{(k,\ell)\in\mathcal{N}\times \mathcal{M}}$ is an orthonormal basis of $\Hi_1\otimes\Hi_2$.
\end{thm}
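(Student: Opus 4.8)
The plan is to verify the two defining properties of an orthonormal basis in turn: first that the family $(e_k\otimes f_\ell)_{(k,\ell)\in\mathcal{N}\times\mathcal{M}}$ is orthonormal, and then that it is total, i.e.\ that its closed linear span is all of $\Hi_1\otimes\Hi_2$.

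Orthonormality follows at once from the definition of the inner product on $\mathcal{E}$. Indeed, for indices $(k,\ell),(k',\ell')\in\mathcal{N}\times\mathcal{M}$ one computes
\[
\langle e_k\otimes f_\ell,\, e_{k'}\otimes f_{\ell'}\rangle
=\langle e_k,e_{k'}\rangle\,\langle f_\ell,f_{\ell'}\rangle
=\delta_{kk'}\,\delta_{\ell\ell'},
\]
since $(e_k)$ and $(f_\ell)$ are orthonormal. Thus the family is an orthonormal system and it remains only to establish totality. For this I would exploit that, by construction, $\Hi_1\otimes\Hi_2$ is the completion of $\mathcal{E}$, so the linear span of the elementary tensors $\phi\otimes\psi$ is dense; hence it suffices to show that each $\phi\otimes\psi$ lies in the closed span $V$ of $(e_k\otimes f_\ell)$, for then $V$ is a closed subspace containing a dense set and therefore equals $\Hi_1\otimes\Hi_2$. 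Writing the basis expansions $\phi=\sum_k c_k e_k$ and $\psi=\sum_\ell d_\ell f_\ell$ with $c_k=\langle e_k,\phi\rangle$ and $d_\ell=\langle f_\ell,\psi\rangle$ (using the convention of antilinearity in the first slot), I set $\phi_N=\sum_{k\le N}c_k e_k$ and $\psi_M=\sum_{\ell\le M}d_\ell f_\ell$. Bilinearity of the map $(\phi,\psi)\mapsto\phi\otimes\psi$, which is checked directly from the defining bi-antilinear form, gives
\[
\phi_N\otimes\psi_M=\sum_{k=1}^N\sum_{\ell=1}^M c_k d_\ell\,(e_k\otimes f_\ell)\in V.
\]

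It then remains to show $\phi_N\otimes\psi_M\to\phi\otimes\psi$ in the norm of $\Hi_1\otimes\Hi_2$, which is the only genuinely analytic step. Using the multiplicativity $\|\alpha\otimes\beta\|=\|\alpha\|_{\Hi_1}\|\beta\|_{\Hi_2}$ (read off from the inner product formula) together with the triangle inequality, one estimates
\[
\|\phi\otimes\psi-\phi_N\otimes\psi_M\|
\le \|\phi-\phi_N\|_{\Hi_1}\,\|\psi\|_{\Hi_2}
+\|\phi_N\|_{\Hi_1}\,\|\psi-\psi_M\|_{\Hi_2}.
\]
Since $\phi_N\to\phi$ and $\psi_M\to\psi$ in their respective spaces and the norms $\|\phi_N\|_{\Hi_1}$ remain bounded (they converge to $\|\phi\|_{\Hi_1}$), the right-hand side tends to $0$, so $\phi\otimes\psi\in V$. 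This yields totality and completes the proof.

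The main obstacle is precisely this last approximation: one must pass from the two separate convergent basis expansions, living in $\Hi_1$ and $\Hi_2$ respectively, to convergence of the product in the completed tensor space. The clean way to handle it is through the multiplicativity of the tensor norm on elementary tensors and the boundedness of the partial sums, rather than by trying to manipulate the double series $\sum_{k,\ell}c_kd_\ell\,e_k\otimes f_\ell$ directly. I note finally that the same scheme extends, by induction on the number of factors, verbatim to the finite tensor product $\Hi_1\otimes\cdots\otimes\Hi_n$.
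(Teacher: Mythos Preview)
Your proof is correct and follows the standard route: orthonormality from the definition of the tensor inner product, and totality by approximating an arbitrary elementary tensor $\phi\otimes\psi$ via the partial sums $\phi_N\otimes\psi_M$, using multiplicativity of the norm on elementary tensors together with bilinearity. The paper itself does not supply a proof of this theorem; it records the statement as well-known and moves on, so there is nothing to compare against beyond noting that your argument is the textbook one. One minor remark: your notation $\sum_{k\le N}$ tacitly assumes the index sets $\mathcal{N},\mathcal{M}$ are countable and ordered, whereas the statement allows arbitrary index sets; this is harmless since for any fixed $\phi$ only countably many coefficients $c_k$ are nonzero, but if you want to be fastidious you can phrase the partial sums over finite subsets $F\subset\mathcal{N}$, $G\subset\mathcal{M}$ instead.
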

It is clear that the previous theorem can be extended to finitely many complex Hilbert spaces  $\Hi_1,\dots , \Hi_n$.
\begin{cor}\label{prhdes33} Let $\Hi_1,\dots , \Hi_n$ be complex Hilbert spaces. Let $(e_k^i)_{k\in \mathcal{N}_i}$ be an orthonormal basis of each $\Hi_i$,  $i=1,2,\dots , n$. Then, $(e_{k_1}^1\otimes\cdots \otimes  e_{k_n}^n)_{(k_1,\dots ,k _n)\in\mathcal{N}_1\times\cdots \mathcal{N}_n}$ is an orthonormal basis of  $ \Hi_1\otimes\cdots \otimes \Hi_n$.
\end{cor}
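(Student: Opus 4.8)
The plan is to argue by induction on the number of factors $n$. The base case $n=2$ is precisely Theorem \ref{prhdes}, so nothing needs to be done there. For the inductive step I would assume the statement for $n-1$ factors and deduce it for $n$ factors by exploiting the associativity of the Hilbert tensor product, which reduces the $n$-fold situation to a single application of the two-factor result.

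Concretely, the key observation is that there is a canonical unitary isomorphism
\[
\Hi_1\otimes\cdots\otimes\Hi_n\;\cong\;\big(\Hi_1\otimes\cdots\otimes\Hi_{n-1}\big)\otimes\Hi_n,
\]
determined on simple tensors by $\phi_1\otimes\cdots\otimes\phi_n\mapsto(\phi_1\otimes\cdots\otimes\phi_{n-1})\otimes\phi_n$. To set this up I would first note that on the algebraic spans this map is well defined and, using the product formula for the inner product on tensors,
\[
\langle \phi_1\otimes\cdots\otimes\phi_n,\ \psi_1\otimes\cdots\otimes\psi_n\rangle=\prod_{i=1}^n\langle \phi_i,\psi_i\rangle,
\]
it preserves inner products on simple tensors; extending by linearity and continuity to the completions then yields the required unitary.

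Granting this isomorphism, the inductive step is immediate. By the induction hypothesis the family $(e^1_{k_1}\otimes\cdots\otimes e^{n-1}_{k_{n-1}})$, indexed by $(k_1,\dots,k_{n-1})\in\mathcal{N}_1\times\cdots\times\mathcal{N}_{n-1}$, is an orthonormal basis of $\Hi_1\otimes\cdots\otimes\Hi_{n-1}$. Applying Theorem \ref{prhdes} to the two Hilbert spaces $\Hi_1\otimes\cdots\otimes\Hi_{n-1}$ and $\Hi_n$, equipped respectively with this basis and with $(e^n_{k_n})_{k_n\in\mathcal{N}_n}$, shows that the vectors $(e^1_{k_1}\otimes\cdots\otimes e^{n-1}_{k_{n-1}})\otimes e^n_{k_n}$ form an orthonormal basis of $(\Hi_1\otimes\cdots\otimes\Hi_{n-1})\otimes\Hi_n$. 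Transporting this basis back through the associativity isomorphism identifies these vectors with $e^1_{k_1}\otimes\cdots\otimes e^n_{k_n}$, which therefore constitute an orthonormal basis of $\Hi_1\otimes\cdots\otimes\Hi_n$, completing the induction.

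The only genuinely technical point, and the step I expect to require the most care, is the construction of the associativity isomorphism: one must verify it is well defined independently of the representation of a vector as a finite sum of simple tensors, that it preserves inner products, and that it extends to a surjective isometry of the completions. A self-contained alternative that avoids invoking associativity is to check the two defining properties of an orthonormal basis directly: orthonormality is read off immediately from the product formula for the inner product, giving $\langle e^1_{k_1}\otimes\cdots\otimes e^n_{k_n},\ e^1_{\ell_1}\otimes\cdots\otimes e^n_{\ell_n}\rangle=\prod_{i=1}^n\delta_{k_i\ell_i}$, while totality follows by expanding an arbitrary simple tensor $\phi_1\otimes\cdots\otimes\phi_n$ in the given bases factor by factor and using continuity of the tensor product in each argument, so that the closed linear span of the proposed basis contains every simple tensor and hence all of $\mathcal{E}=\Hi_1\widetilde{\otimes}\cdots\widetilde{\otimes}\Hi_n$, which is dense in the completion by definition.
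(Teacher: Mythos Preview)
Your proposal is correct and follows exactly the approach the paper intends: the paper does not write out a proof of this corollary at all, remarking only that ``it is clear that the previous theorem can be extended to finitely many complex Hilbert spaces,'' which is precisely your induction on $n$ with base case Theorem~\ref{prhdes}. Your write-up is more detailed than what the paper provides, but the underlying idea is the same.
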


We observe that the Hilbert spaces are not assumed separable in the previous statements. Non-separable Hilbert spaces arise in several situations,  in particular the countable tensor product of separable Hilbert spaces is not separable.  	
See for instance  \cite{gg:qq1}, \cite{gg:qq2} for some recent model proposals in the context of Quantum Field Theory within the non-separable setting. Hereafter we will only consider complex separable Hilbert spaces.
\medskip%

 We refer the reader to \cite{weid:b1} for a detailed treatment on the basics of Schatten-von Neumann classes of operators between different Hilbert spaces. Regarding the Hilbert tensor products
 the reader can refer to \cite{r-s:vol1}, \cite{prug:b1} for more comprehensive treatments on these topics. However we should point out that some preliminaries regarding tensor products of Hilbert spaces and specially partial traces are not easily accessible from the existing  literature. In particular we have included some basics on countable tensor products of Hilbert spaces.  
 In this regard we refer to \cite{gui:hi}, \cite{Pah:book} for more detailed discussions for some of those properties. 
\subsection{Schatten-von Neumann classess and the trace}
We now recall the notion of Schatten-von Neumann ideals. Let  $\Hi_1,\Hi_2$ be  complex Hilbert spaces. We denote by $\mathcal{L}(\Hi_1, \Hi_2)$ the algebra of bounded linear operators from $\Hi_1$ into $\Hi_2$ endowed with the operator norm. If $\Hi_1=\Hi_2$ we simply write $\mathcal{L}(\Hi)$. A linear compact operator $A:\Hi_1\rightarrow \Hi_2$ is said to belong to the Schatten-von Neumann class $S_r(\Hi_1,\Hi_2)$ for $0<r<\infty$ if 
$$\sum\limits_{n=1}^{\infty}(s_n(A))^r<\infty,$$ 
where $s_n(A)$ denote the singular values of $A$, i.e. the eigenvalues of $|A|=\sqrt{A^*A}$,  
with multiplicities counted. If $1\leq r<\infty$ 
the class $S_r(\Hi_1,\Hi_2)$ becomes a Banach space endowed with the norm 
\[\|A\|_{S_r}=\left(\sum\limits_{n=1}^{\infty}(s_n(A))^r\right)^{\frac{1}{r}}.\]
If $0<r<1$ the identity above only defines a quasi-norm with respect to which $S_r(\Hi_1,\Hi_2)$ is complete. In  the case $\Hi=\Hi_1=\Hi_2$ we simply denote $S_r(\Hi_1,\Hi_2)=S_r(\Hi)$. The class $S_2(\Hi_1,\Hi_2)$ and $S_1(\Hi)$ are usually known as the class of Hilbert-Schmidt operators and the trace class, respectively. In the case of $r=\infty$ we can put $\|A\|_{S_{\infty}}$ to be the operator norm of the bounded operator
$A:\Hi_1\to \Hi_2$. In this case  $S_{\infty}(\Hi_1,\Hi_2)$ is the class of compact operators from 
 $\Hi_1$ into $\Hi_2$ endowed with the operator norm.\\

\section{Schatten-von Neumann properties of Tensors }\label{sec3}
 We start by recalling the definition of tensors of operators and deducing some basic properties on tensors of Schatten-von Neumann operators.

\subsection{Tensor products of operators}
We can now define the tensor product of operators. 
\begin{defn} Let $A$ be an operator on $ \Hi_1$, with a dense domain  $Dom A$, and $B$ be a operator on $ \Hi_2$ with dense domain $Dom B$. Let $D$ denote the space  $Dom A\overline{\otimes} Dom B$, that is the space of finite linear combinations of $\phi\otimes\psi$ with $\phi\in Dom A$ and $\psi\in Dom B$. Clearly $D$ is dense in  $ \Hi_1\otimes \Hi_2$. One defines the operator $A\otimes B$ on  $D$ by 
\[(A\otimes B)(\phi\otimes\psi)=A\phi\otimes B\psi\]
and its linear extension to all $D$.
	\end{defn}
It is clear that the definition above can be natually extended to a finite family of densely defined operators.
\begin{prop}
	The operator $A\otimes B$ is well defined. If $A$ and $B$ are closable operators then so is $A\otimes B$. 
\end{prop}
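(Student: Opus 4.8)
\emph{Plan.} The statement has two independent assertions, and I would treat them in turn: first that the prescription $(A\otimes B)(\phi\otimes\psi)=A\phi\otimes B\psi$ genuinely defines a linear map on $D=Dom A\,\overline{\otimes}\,Dom B$, and then that closability of the factors is inherited.

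\emph{Well-definedness.} The only issue is that an element of $D$ admits many representations as a finite sum $\sum_i\phi_i\otimes\psi_i$, so I must check that $\sum_i A\phi_i\otimes B\psi_i$ is independent of the representation; equivalently, that $\sum_i\phi_i\otimes\psi_i=0$ forces $\sum_i A\phi_i\otimes B\psi_i=0$. Conceptually this is the universal property of the algebraic tensor product: the map $(\phi,\psi)\mapsto A\phi\otimes B\psi$ is bilinear on $Dom A\times Dom B$ because $A$ and $B$ are linear, hence factors through a unique linear map, and $D$ sitting inside $\Hi_1\otimes\Hi_2$ is a faithful copy of that algebraic tensor product. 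To make this concrete I would run the standard linear-independence reduction: regroup the sum so that the second factors are linearly independent; choose a biorthogonal family $\beta_j$ with $\langle\psi_i,\beta_j\rangle=\delta_{ij}$ (available since the $\psi_i$ span a finite-dimensional subspace of $Dom B$); and pair $\sum_i\phi_i\otimes\psi_i=0$ against $\alpha\otimes\beta_j$ using the inner-product formula $\langle\phi\otimes\psi,\phi'\otimes\psi'\rangle=\langle\phi,\phi'\rangle\langle\psi,\psi'\rangle$ of the defining Definition. This yields $\langle\phi_j,\alpha\rangle=0$ for every $\alpha$, hence the vanishing of each regrouped first factor, and linearity of $A,B$ then propagates this vanishing to $\sum_i A\phi_i\otimes B\psi_i$.

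\emph{Closability.} Here I would invoke the standard criterion that a densely defined Hilbert-space operator is closable if and only if its adjoint is densely defined (cf. \cite{r-s:vol1}). Since $A,B$ are closable, $A^*,B^*$ are densely defined, so $Dom A^*$ and $Dom B^*$ are dense in $\Hi_1,\Hi_2$, whence $Dom A^*\,\overline{\otimes}\,Dom B^*$ is dense in $\Hi_1\otimes\Hi_2$ (approximate a simple tensor $\alpha\otimes\beta$ by $\alpha'\otimes\beta'$ with $\alpha'\in Dom A^*$, $\beta'\in Dom B^*$, and use that simple tensors span a dense subspace). I would then establish the inclusion $A^*\otimes B^*\subseteq (A\otimes B)^*$ by the one-line adjoint identity
\[
\langle (A\otimes B)(\phi\otimes\psi),\,\phi'\otimes\psi'\rangle
=\langle A\phi,\phi'\rangle\langle B\psi,\psi'\rangle
=\langle \phi,A^*\phi'\rangle\langle \psi,B^*\psi'\rangle
=\langle \phi\otimes\psi,\,(A^*\otimes B^*)(\phi'\otimes\psi')\rangle,
\]
extended by bilinearity. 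This shows $Dom(A\otimes B)^*\supseteq Dom A^*\,\overline{\otimes}\,Dom B^*$, a dense set, so $(A\otimes B)^*$ is densely defined and therefore $A\otimes B$ is closable.

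\emph{Main obstacle.} The closability half is essentially formal once the adjoint criterion is in hand; the genuinely delicate point is well-definedness, since it rests on the injectivity of the algebraic tensor product inside the Hilbert tensor product. The regrouping together with the biorthogonal-system step is where care is required, in particular respecting the antilinearity convention on the first slot of the inner product. Everything else is routine bookkeeping with linearity and with the density of tensor products of dense subspaces.
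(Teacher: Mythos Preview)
Your argument is correct on both counts: the regrouping-plus-biorthogonal step cleanly disposes of well-definedness, and the adjoint criterion together with the inclusion $A^*\otimes B^*\subseteq (A\otimes B)^*$ is the standard route to closability. One small cosmetic point: in the well-definedness step you might want to say explicitly that the regrouped first factors remain in $Dom A$ (they are linear combinations of the original $\phi_i$, and $Dom A$ is a linear subspace), since that is what lets you apply $A$ at the end.

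As for comparison with the paper: there is nothing to compare. The paper states this proposition as a preliminary fact without proof, relying on the references \cite{r-s:vol1}, \cite{prug:b1}, \cite{weid:b1} for the foundational material on tensor products of Hilbert spaces and operators. Your proof is exactly the argument one finds in those sources (in particular Reed--Simon, Section VIII.10), so you have supplied what the paper chose to omit.
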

\begin{defn}
	If $A$ and $B$ are closable operators, we call {\em tensor product } of  $A$ by $B$ the closure of $A\otimes B$ and it still denoted by $A\otimes B$. 
\end{defn}
The following proposition follows immediately from the definition of the tensor $A\otimes B$.
\begin{prop}\label{proboundt} If $A$ and $B$ are bounded operators  on $ \Hi_1$ and  $ \Hi_2$, respectively, then $A\otimes B$ is a bounded operator on  $ \Hi_1\otimes \Hi_2$ and we have
	\[\|A\otimes B\|=\|A\|\|B\|.\] 
\end{prop}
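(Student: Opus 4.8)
The plan is to establish the two inequalities $\n{A\otimes B}\le\n{A}\,\n{B}$ and $\n{A\otimes B}\ge\n{A}\,\n{B}$ separately. Before estimating the norm I would first secure boundedness on the dense algebraic tensor product $\mathcal{E}=\Hi_1\widetilde{\otimes}\Hi_2$, so that the operator genuinely extends by continuity to a bounded operator on the completion $\Hi_1\otimes\Hi_2$; the norm identity then passes to the closure automatically.

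For the upper bound I would exploit the factorization $A\otimes B=(A\otimes I)(I\otimes B)$, which holds on simple tensors because $(A\otimes I)(\phi\otimes B\psi)=A\phi\otimes B\psi=(A\otimes B)(\phi\otimes\psi)$, hence on all of $\mathcal{E}$ by linearity. It then suffices to bound each factor. To estimate $\n{I\otimes B}$, fix an orthonormal basis $(e_k)$ of $\Hi_1$ and expand an arbitrary $u\in\mathcal{E}$ as a finite sum $u=\sum_k e_k\otimes g_k$ with $g_k\in\Hi_2$; Theorem \ref{prhdes} yields the Parseval identity $\n{u}^2=\sum_k\n{g_k}^2$. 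Since $(I\otimes B)u=\sum_k e_k\otimes Bg_k$ and the relation $\langle e_k\otimes h_k,e_{k'}\otimes h_{k'}\rangle=\delta_{kk'}\langle h_k,h_{k'}\rangle$ orthogonalizes the summands, one gets $\n{(I\otimes B)u}^2=\sum_k\n{Bg_k}^2\le\n{B}^2\sum_k\n{g_k}^2=\n{B}^2\n{u}^2$, so $\n{I\otimes B}\le\n{B}$. The symmetric argument, expanding instead along an orthonormal basis of $\Hi_2$, gives $\n{A\otimes I}\le\n{A}$, and submultiplicativity of the operator norm closes the estimate $\n{A\otimes B}\le\n{A}\,\n{B}$. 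This already shows $A\otimes B$ is bounded and extends to all of $\Hi_1\otimes\Hi_2$ with the same bound.

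For the lower bound I would simply test the operator on simple tensors. The inner product of the definition gives $\n{\phi\otimes\psi}=\n{\phi}\,\n{\psi}$ and likewise $\n{(A\otimes B)(\phi\otimes\psi)}=\n{A\phi\otimes B\psi}=\n{A\phi}\,\n{B\psi}$, so for nonzero $\phi,\psi$ the quotient $\n{(A\otimes B)(\phi\otimes\psi)}/\n{\phi\otimes\psi}$ equals $\n{A\phi}\,\n{B\psi}/(\n{\phi}\,\n{\psi})$. Taking the supremum over $\phi$ and $\psi$ independently recovers $\n{A}\,\n{B}$, and since this is a lower bound for $\n{A\otimes B}$, combining with the previous paragraph yields the claimed equality.

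The computations are routine; the only step requiring genuine care is the passage from the algebraic tensor product to its Hilbert completion. One must verify that the vector-valued expansion $u=\sum_k e_k\otimes g_k$ together with its Parseval identity is legitimate, which is precisely what the orthonormal-basis statement of Theorem \ref{prhdes} supplies. Once the bound $\n{(A\otimes B)u}\le\n{A}\,\n{B}\,\n{u}$ is verified on the dense subspace $\mathcal{E}$, both boundedness and the norm identity extend to the closure by continuity, completing the proof.
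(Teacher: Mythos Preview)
Your argument is correct. The paper does not actually give a proof of this proposition; it merely records that it ``follows immediately from the definition of the tensor $A\otimes B$'' and moves on. So there is nothing to compare strategies against---you have supplied the standard details the paper chose to omit.

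One small point of care: when you write ``expand an arbitrary $u\in\mathcal{E}$ as a \emph{finite} sum $u=\sum_k e_k\otimes g_k$'', this is only literally true if the basis $(e_k)$ is chosen so that its first few vectors span the finite-dimensional subspace generated by the left factors $\phi_i$ appearing in $u=\sum_i\phi_i\otimes\psi_i$. With a fixed basis chosen in advance the expansion may be an infinite (convergent) sum; the Parseval computation still goes through verbatim, but you should either adapt the basis to $u$ or phrase the expansion as a convergent series rather than a finite one.
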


A similar property holds for Schatten-von Neumann classes. We also give some addittional  properties regarding the trace of tensors in the following theorem. We recall their proofs for the convenience of the reader since they are not easy to find in the literature. 

\begin{thm}\label{scprt} Let $\Hi_1, \Hi_2$ be  two complex separable Hilbert spaces. Let $A$ and $B$ be bounded operators  on $ \Hi_1$ and  $ \Hi_2$, respectively.
	\begin{enumerate}
		\item If $A$ and $B$ are compact  operators, then $A\otimes B$ is a compact operator on  $ \Hi_1\otimes \Hi_2$. 

\item Let $0<p<\infty$. If $A\in S_p(\Hi_1)$ and $B\in S_p(\Hi_2)$. Then  $A\otimes B\in S_p( \Hi_1\otimes \Hi_2)$. Moreover we have
\[\|A\otimes B\|_p=\|A\|_p\|B\|_p,\]
and for $p=1$, the following formula for the trace holds
	\[\Tr (A\otimes B)=\Tr (A)\Tr (B).\]
\end{enumerate}
\end{thm}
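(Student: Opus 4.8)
The plan is to reduce the entire statement to a single computation: the singular values of $A\otimes B$, listed with multiplicity, are exactly the products $s_m(A)s_n(B)$ over all pairs of indices $(m,n)$. The cornerstone will be the operator identity $|A\otimes B|=|A|\otimes|B|$, and everything else follows by bookkeeping on the resulting doubly-indexed sequence.

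First I would establish the adjoint relation $(A\otimes B)^*=A^*\otimes B^*$. On the dense span of elementary tensors this is immediate from the defining rule $(A\otimes B)(\phi\otimes\psi)=A\phi\otimes B\psi$ together with the inner product formula $\langle \phi\otimes\psi,\phi'\otimes\psi'\rangle=\langle\phi,\phi'\rangle\langle\psi,\psi'\rangle$, and it extends to the closure by density and boundedness (Proposition \ref{proboundt}). I would then compute, again first on elementary tensors, $(A\otimes B)^*(A\otimes B)=(A^*\otimes B^*)(A\otimes B)=(A^*A)\otimes(B^*B)$, using multiplicativity of the tensor product of operators. Since $(A^*A)\otimes(B^*B)=|A|^2\otimes|B|^2=(|A|\otimes|B|)^2$ and $|A|\otimes|B|$ is positive (a tensor of positive operators, by the same diagonalisation argument below), uniqueness of the positive square root forces $|A\otimes B|=|A|\otimes|B|$.

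Next I would diagonalise. Taking eigenbases with $|A|e_m=s_m(A)e_m$ and $|B|f_n=s_n(B)f_n$, Corollary \ref{prhdes33} shows that $\{e_m\otimes f_n\}$ is an orthonormal basis of $\Hi_1\otimes\Hi_2$ consisting of eigenvectors of $|A|\otimes|B|$ with eigenvalues $s_m(A)s_n(B)$. Hence these products are precisely the singular values of $A\otimes B$. Part (1) follows at once: assuming $A,B\neq0$, the inequality $s_m(A)s_n(B)\ge\varepsilon$ forces both $s_m(A)\ge\varepsilon/\|B\|$ and $s_n(B)\ge\varepsilon/\|A\|$, each holding for only finitely many indices, so only finitely many products exceed any $\varepsilon>0$; thus the singular values tend to zero and $A\otimes B$ is compact. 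For the $S_p$ identity in part (2) the double series of nonnegative terms factors (Tonelli),
\[\|A\otimes B\|_p^p=\sum_{m,n}\big(s_m(A)s_n(B)\big)^p=\Big(\sum_m s_m(A)^p\Big)\Big(\sum_n s_n(B)^p\Big)=\|A\|_p^p\,\|B\|_p^p,\]
and taking $p$-th roots gives $\|A\otimes B\|_p=\|A\|_p\|B\|_p$.

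Finally, for the trace at $p=1$, I would evaluate it on the orthonormal basis $\{e_k\otimes f_\ell\}$ from Corollary \ref{prhdes33}:
\[\Tr(A\otimes B)=\sum_{k,\ell}\langle e_k\otimes f_\ell,\,Ae_k\otimes Bf_\ell\rangle=\sum_{k,\ell}\langle e_k,Ae_k\rangle\langle f_\ell,Bf_\ell\rangle=\Tr(A)\Tr(B).\]
The main points requiring care are the passage from elementary tensors to the closure in the adjoint identity and in the factorisation of $|A\otimes B|$, and the interchange of summation in the last display. The latter is the only genuine subtlety: it is legitimate because for $A\in S_1$ one has $\sum_k|\langle e_k,Ae_k\rangle|\le\|A\|_1<\infty$ (by two applications of Cauchy--Schwarz together with $\Tr|A|=\|A\|_1$), and likewise for $B$, so the double series converges absolutely and Fubini applies.
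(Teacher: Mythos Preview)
Your proof is correct and follows essentially the same strategy as the paper: identify the singular values of $A\otimes B$ as the products $s_m(A)\,s_n(B)$ and then read off compactness, the $S_p$ identity, and the trace formula. The only cosmetic difference is that the paper obtains this identification by tensoring the Schmidt decompositions of $A$ and $B$ directly, whereas you first prove $|A\otimes B|=|A|\otimes|B|$ via the adjoint relation and uniqueness of the positive square root; both routes are standard and lead to the same place.
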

\begin{proof} (1) Since $A$ and $B$ are compact operators we can write
\[A=\sum\limits_{i=1}^{\infty}\alpha_i|u_i\rangle\langle v_i|,\,\, B=\sum\limits_{j=1}^{\infty}\beta_j|t_j\rangle\langle w_j|, \]
where $(u_i)_i, (v_i)_i $ and $(t_j)_j, (w_j)_j $ are orthonormal sequences in $ \Hi_1$ and  $ \Hi_2$ respectively, and $(\alpha_i)_i, (\beta_j)_j$ are real non-negative sequences converging to $0$.\\

Then, we can write 
\beq A\otimes B=\sum\limits_{i,j=1}^{\infty}\alpha_i\beta_j|u_i\otimes t_j\rangle\langle v_i\otimes w_j|.\label{trAtB}\eq
Hence we obtain a polar representation of $A\otimes B$:
\beq A\otimes B=\sum\limits_{i,j=1}^{\infty}\alpha_i\beta_j|v_i\otimes w_j\rangle\langle v_i\otimes w_j|.\label{tenab}\eq
The compactness now follows immediately since $ (v_i\otimes w_j)_{ij}$ is an orthonormal sequence in $ \Hi_1\otimes \Hi_2$ and $(\alpha_i\beta_j)_{ij}$ converges to $0$ as $(i,j)$ goes to $\infty$.\\

(2) By \eqref{tenab} with  $A\in S_p(\Hi_1), B\in S_p(\Hi_2)$, we note that
\[\sum\limits_{i,j=1}^{\infty}(\alpha_i\beta_j)^p=\sum\limits_{i=1}^{\infty}\alpha_i^p\sum\limits_{j=1}^{\infty}\beta_j^p=\|A\|_p^p\|B\|_p^p .\]
Therefore  $A\otimes B\in S_p( \Hi_1\otimes \Hi_2)$ and 
\[\|A\otimes B\|_p=\|A\|_p\|B\|_p.\]

For $p=1$, in order to obtain the formula for the trace, we first note that there exist orthonormal basis  $(u_i)_i, (v_i)_i $ and $(t_j)_j, (w_j)_j $ of $ \Hi_1$ and  $ \Hi_2$ respectively, and complex sequences $(\alpha_i)_i, (\beta_j)_j$ converging to $0$ such that
\[A=\sum\limits_{i=1}^{\infty}\alpha_i|u_i\rangle\langle v_i|\,,\,\,\, B=\sum\limits_{j=1}^{\infty}\beta_j|t_j\rangle\langle w_j|. \]
Hence
\beq A\otimes B=\sum\limits_{i,j=1}^{\infty}\alpha_i\beta_j|u_i\otimes t_j\rangle\langle v_i\otimes w_j|.\label{trAtB2}\eq

Since  $ (v_i\otimes w_j)_{ij}$ is an orthonormal basis of $ \Hi_1\otimes \Hi_2$,  by \eqref{trAtB2} we obtain
 \[ (A\otimes B)(v_k\otimes w_l)=\alpha_k\beta_l|u_k\otimes t_l\rangle\]
 
Therefore
 \begin{align*}
 \Tr(A\otimes B)&=\sum\limits_{i,j=1}^{\infty}\langle v_i\otimes w_j, (A\otimes B)(v_i\otimes w_j)\rangle\\ 
 &=\sum\limits_{i,j=1}^{\infty}\langle v_i\otimes w_j, \alpha_i\beta_j(u_i\otimes t_j)\rangle\\
 &=\sum\limits_{i,j=1}^{\infty}\alpha_i\beta_j\langle v_i\otimes w_j, u_i\otimes t_j\rangle\\
  &=\sum\limits_{i,j=1}^{\infty}\alpha_i\beta_j\langle v_i, u_i\rangle\langle w_j,  t_j\rangle\\
  &=\sum\limits_{i=1}^{\infty}\alpha_i\langle v_i, u_i\rangle
  \sum\limits_{j=1}^{\infty}\beta_j\langle w_j,  t_j\rangle\\
 &=\Tr(A)\Tr(B).
\end{align*} 
\end{proof}
The above theorem  has immediate consequences for finite tensors of operators. The following corollary follows arguing by induction on Proposition \ref{proboundt} and Theorem \ref{scprt}.

\begin{cor} \label{corsp} Let $\Hi_1,\dots,\Hi_n$  be complex separable Hilbert spaces.\\
(i)  If $A_j\in \mathcal{L}(\Hi_j)$, then $A_1\otimes\cdots \otimes A_n\in \mathcal{L}(\Hi_1\otimes\cdots\otimes \Hi_n)$
 and 
 \[\|\bigotimes \limits_{j=1}^nA_j\|_{\mathcal{L}(\Hi_1\otimes\cdots\otimes \Hi_n)}=\prod\limits_{j=1}^{n}\|A_j\|_{\mathcal{L}(\Hi_j)}. \]

\noindent (ii) Let  $0<p<\infty$. If $A_1,\dots, A_n$ belong to $S_p(\Hi_1), \dots, S_p(\Hi_n)$ respectively, then $A_1\otimes\cdots \otimes A_n\in S_p(\Hi_1\otimes\cdots\otimes \Hi_n)$. Moreover 
\[\|\bigotimes \limits_{j=1}^nA_j\|_{S_p(\Hi_1\otimes\cdots\otimes \Hi_n)}=\prod\limits_{j=1}^{n}\|A_j\|_{S_p(\Hi_j)}.\]

In particular, for $p=1$ the trace class we additionally have 

\[\Tr\left(\bigotimes \limits_{j=1}^nA_j\right)=\prod\limits_{j=1}^{n}\Tr(A_j).\]	
\end{cor}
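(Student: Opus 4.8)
The plan is to argue by induction on the number of factors $n$, exploiting the associativity of the Hilbert tensor product so that each step reduces to the already-established two-factor case. The base case $n=2$ is settled directly: the boundedness and the multiplicativity of the operator norm in part (i) are precisely Proposition \ref{proboundt}, while the Schatten membership, the multiplicativity of the $S_p$-norm, and the trace identity in part (ii) are exactly Theorem \ref{scprt}(2).

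For the inductive step, assume the statement holds for $n-1$ factors. The structural fact I would invoke is that the Hilbert tensor product is associative up to a canonical unitary isomorphism, so that $\Hi_1\otimes\cdots\otimes\Hi_n$ may be identified with $\Ki\otimes\Hi_n$, where $\Ki:=\Hi_1\otimes\cdots\otimes\Hi_{n-1}$. The space $\Ki$ is again a complex separable Hilbert space---a finite tensor product of separable spaces is separable, since $(e^1_{k_1}\otimes\cdots\otimes e^{n-1}_{k_{n-1}})$ is a countable orthonormal basis by Corollary \ref{prhdes33}---so both Proposition \ref{proboundt} and Theorem \ref{scprt} apply to the pair $(\Ki,\Hi_n)$. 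Under the above identification the operator $\bigotimes_{j=1}^n A_j$ corresponds to $\p{\bigotimes_{j=1}^{n-1}A_j}\otimes A_n$, a genuine two-fold tensor of an operator on $\Ki$ with the operator $A_n$ on $\Hi_n$, and this reduces every assertion for $n$ factors to the two-factor case applied to $\big(\bigotimes_{j=1}^{n-1}A_j,\,A_n\big)$.

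With this reduction the computations are immediate. For (i), the inductive hypothesis gives $\bigotimes_{j=1}^{n-1}A_j\in\mathcal{L}(\Ki)$ with norm $\prod_{j=1}^{n-1}\|A_j\|$, and Proposition \ref{proboundt} then yields $\|\bigotimes_{j=1}^n A_j\|=\|\bigotimes_{j=1}^{n-1}A_j\|\,\|A_n\|=\prod_{j=1}^n\|A_j\|$. For (ii), the inductive hypothesis places $\bigotimes_{j=1}^{n-1}A_j$ in $S_p(\Ki)$ with $S_p$-norm $\prod_{j=1}^{n-1}\|A_j\|_{S_p}$, and Theorem \ref{scprt}(2) gives membership in $S_p(\Ki\otimes\Hi_n)$ together with the multiplicative norm formula; composing the two-factor trace identity with the inductive one delivers $\Tr\big(\bigotimes_{j=1}^n A_j\big)=\Tr\big(\bigotimes_{j=1}^{n-1}A_j\big)\Tr(A_n)=\prod_{j=1}^n\Tr(A_j)$ in the case $p=1$.

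The only point genuinely requiring care---the main obstacle, such as it is---is justifying the associativity identification and checking that the tensor-of-operators construction is compatible with it, namely that the canonical unitary intertwines $\bigotimes_{j=1}^n A_j$ with $\p{\bigotimes_{j=1}^{n-1}A_j}\otimes A_n$. This is cleanest to verify on the dense subspace spanned by the elementary tensors $e^1_{k_1}\otimes\cdots\otimes e^n_{k_n}$ of Corollary \ref{prhdes33}, on which both operators act by the identical rule; the boundedness from part (i) (respectively the $S_p$-estimate) then propagates the agreement to the whole space by density.
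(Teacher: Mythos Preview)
Your proof is correct and follows exactly the approach the paper indicates: the paper's own proof is the single sentence ``follows arguing by induction on Proposition \ref{proboundt} and Theorem \ref{scprt},'' and you have simply spelled out that induction carefully, including the associativity identification that makes the inductive step go through.
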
%

As an example we study  a system of anharmonic oscillators by using recent results  obtained in \cite{anh:cdr}. We consider  anharmonic oscillators  on  $\Rn$  of the form
 \beq A_{k,\ell}=(-\Delta)^{\ell}+|x|^{2k} \label{gahw1},\eq
where $k,\ell$ are integers $\geq 1$. In \cite{anh:cdr}, it was established  that  $(A_{k,\ell}+1)^{-\mu}$ belongs to the Schatten-von Neumann class $S_p(L^2(\Rn))$ provided $\mu>\frac{(k+\ell)n}{2k\ell p} $.

\begin{thm} Let $k_j,\ell_j$ be integers $\geq 1$ for $j=1,\dots, N$. We assume
\[\mu_j>\frac{(k_j+\ell_j)n}{2k_j\ell_j p} ,\]
    for  $j=1,\dots, N$. We denote $\widetilde{A}_j:=( A_{k_j,\ell_j}+1)^{-\mu_j}$ for $j=1,\dots, N$; with $A_{k_j,l_j}$ anharmonic oscillators as in \eqref{gahw1}. Then,  $\widetilde{A}_j\in S_p(L^2(\Rn))$ for each  $j=1,\dots, N$ and 
   \beq\label{anhsch1}\bigotimes \limits_{j=1}^N\widetilde{A}_j\in S_p(L^2(\mathbb{R}^{nN})).\eq
Moreover, the eigenvalues $\lambda_m$ of $\bigotimes \limits_{j=1}^N\widetilde{A}_j$ satisfy the following rate of decay
\[
		 \lambda_m=o(m^{-\frac{1}{p}})\,,\quad \text{as} \quad m \rightarrow \infty\, .
		 \]
Consequently, the eigenvalues (Energy levels) $E_m$ of $\bigotimes \limits_{j=1}^N A_j$ have a growth of order  at least 
\[
		m^{\frac{1}{p}}\,,\quad \text{as} \quad m \rightarrow \infty\, .
		 \]
\end{thm}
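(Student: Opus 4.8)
The plan is to layer the four assertions on top of the tensor-product Schatten results already established, reducing the analytic content to a single elementary fact about the decay of $p$-summable sequences. First I would dispatch the membership claims. The statement $\widetilde{A}_j\in S_p(L^2(\Rn))$ is exactly the quoted result from \cite{anh:cdr} applied under the hypothesis $\mu_j>\frac{(k_j+\ell_j)n}{2k_j\ell_j p}$. For \eqref{anhsch1} I would invoke Corollary \ref{corsp}(ii) directly: since each $\widetilde{A}_j$ lies in $S_p(L^2(\Rn))$, the tensor $\bigotimes_{j=1}^N\widetilde{A}_j$ lies in $S_p\big(\bigotimes_{j=1}^N L^2(\Rn)\big)$ with $\|\bigotimes_{j=1}^N\widetilde{A}_j\|_{S_p}=\prod_{j=1}^N\|\widetilde{A}_j\|_{S_p}$, and I would then use the canonical isometry $\bigotimes_{j=1}^N L^2(\Rn)\cong L^2(\mathbb{R}^{nN})$ to land in $S_p(L^2(\mathbb{R}^{nN}))$.

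Next comes the eigenvalue decay, which is the heart of the statement. The key observation is that each $\widetilde{A}_j=(A_{k_j,\ell_j}+1)^{-\mu_j}$ is a positive self-adjoint compact operator, because $A_{k_j,\ell_j}$ is a nonnegative self-adjoint operator with discrete spectrum tending to infinity; hence $T:=\bigotimes_{j=1}^N\widetilde{A}_j$ is positive and self-adjoint, and its singular values coincide with its eigenvalues $\lambda_m$. Membership in $S_p$ means $\sum_m\lambda_m^p<\infty$, so after arranging the $\lambda_m$ in non-increasing order (with multiplicities) and using monotonicity I would estimate
\[
\frac{m}{2}\,\lambda_m^p\le\sum_{k=\lfloor m/2\rfloor+1}^{m}\lambda_k^p\le\sum_{k>m/2}\lambda_k^p,
\]
where the right-hand side, being a tail of the convergent series $\sum_m\lambda_m^p$, tends to $0$ as $m\to\infty$. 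This forces $m\,\lambda_m^p\to 0$, that is $\lambda_m=o(m^{-1/p})$.

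Finally, for the energy levels I would pass to reciprocals. Writing $E_m=\lambda_m^{-1}$ for the eigenvalues of the unbounded inverse operator $T^{-1}=\bigotimes_{j=1}^N(A_{k_j,\ell_j}+1)^{\mu_j}$, the energy operator associated with the coupled system, the relation $m^{1/p}\lambda_m\to 0$ gives immediately $E_m\,m^{-1/p}=(m^{1/p}\lambda_m)^{-1}\to\infty$, so the energy levels grow at least of order $m^{1/p}$. I expect the genuinely delicate points to be twofold: first, the bookkeeping in the decay step, namely ensuring the $\lambda_m$ are enumerated in non-increasing order with multiplicities so that the monotonicity estimate is valid, together with the verification that tensoring preserves positivity and self-adjointness so that singular values may be identified with eigenvalues; and second, the correct reading of the energy operator, since the reciprocal eigenvalues $E_m=\lambda_m^{-1}$ are those of $T^{-1}=\bigotimes_j(A_{k_j,\ell_j}+1)^{\mu_j}$ rather than of $\bigotimes_j A_{k_j,\ell_j}$ literally. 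Both the $S_p$-tensorization and the Hilbert-space identification are then routine given the results already in hand.
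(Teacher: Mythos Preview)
Your proposal is correct and follows essentially the same route as the paper: cite \cite{anh:cdr} for each $\widetilde{A}_j\in S_p$, apply Corollary~\ref{corsp}(ii) together with the isomorphism $\bigotimes_{j=1}^N L^2(\Rn)\cong L^2(\mathbb{R}^{nN})$ for \eqref{anhsch1}, and then read off the eigenvalue decay and the growth of the $E_m$. The only difference is that where the paper simply invokes Corollary~5.6 of \cite{anh:cdr} for the $o(m^{-1/p})$ decay, you supply the standard tail-sum argument directly; your observation about the precise meaning of the ``energy operator'' is also a fair reading of the statement.
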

\begin{proof} The first consequence follows from Corollary 5.4 (b) of \cite{anh:cdr}. The property \eqref{anhsch1} follows from  Corollary \ref{corsp} and again  Corollary 5.4(b) of \cite{anh:cdr} and the fact that $L^2(\mathbb{R}^{nN})$ and $\bigotimes \limits_{j=1}^N L^2(\mathbb{R}^{n})$ are isomorphic.\\

The rate of decay is a consequence of \eqref{anhsch1} and Corollary 5.6 of \cite{anh:cdr}, and the estimate on the rate of growth of the $E_m$ follows
\end{proof}

\section{Global symbols of invariant operators with respect to partitions of Hilbert spaces}\label{sec4}%
In this section we recall some basic elements of the notion of Fourier multipliers or invariant operators on Hilbert spaces and their corresponding global symbols introduced in \cite{fjdmr:foum}. These notions will be crucial for the study of tensors of invariant operators in the last section.\\%

Given a complex separable Hilbert space $\Hi$, we will  consider a fixed partition of $\Hi$ into a direct sum of finite dimensional subspaces,
$$\Hcal=\bigoplus_{j} H_{j}.$$
One can  associate a notion of invariance relative to such  partition as we will see from the Theorem \ref{THM:inv-rem} below established in \cite{fjdmr:foum} as Theorem 2.1. We recall it in detail with the corresponding notations since it is essential for our discussion. We denote by $d_j$ the dimension of  $H_{j}$ and by  $\{e_{j}^{k}\}_{1\leq k\leq d_{j}}$  an orthonormal basis of $ H_{j}$. In particular we can apply this notion of invariance to  the setting when $M$ is a compact manifold without boundary, $\Hcal=L^{2}(M)$ and $\Hcal^{\infty}=C^{\infty}(M)$. 
 In this particular example  we can
 think of $\{e_{j}^{k}\}$ being an orthonormal basis given by eigenfunctions
 of an elliptic operator on $M$, and $d_{j}$ the corresponding 
 multiplicities. This notion of invariance has been recently applied in the setting of control theory of Cauchy problems on Hilbert spaces \cite{CGDR:Hi}.  
 
\begin{thm}\label{THM:inv-rem}
	Let $\Hcal$ be a complex separable Hilbert space and let $\Hcal^{\infty}\subset \Hcal$ be a dense
	linear subspace of $\Hcal$. Let $\{d_{j}\}_{j\in\N_{0}}\subset\N$ and let
	$\{e_{j}^{k}\}_{j\in\N_{0}, 1\leq k\leq d_{j}}$ be an
	orthonormal basis of $\Hcal$ such that
	$e_{j}^{k}\in \Hcal^{\infty}$ for all $j$ and $k$. Let $H_{j}:={\rm span} \{e_{j}^{k}\}_{k=1}^{d_{j}}$,
	and let $P_{j}:\Hcal\to H_{j}$ be the corresponding orthogonal projection.
	For $f\in\Hcal$, we denote $$\widehat{f}(j,k):=(f,e_{j}^{k})_{\Hcal}$$ and let
	$\widehat{f}(j)\in \ce^{d_{j}}$ denote the column of $\widehat{f}(j,k)$, $1\leq k\leq d_{j}.$
	Let $T:\Hcal^{\infty}\to \Hcal$ be a linear operator.
	Then the following
	conditions are equivalent:
	\begin{itemize}
		\item[(A)] For each $j\in\ene_0$, we have $T(H_j)\subset H_j$. 
		\item[(B)] For each $\ell\in\ene_0$ there exists a matrix 
		$\sigma(\ell)\in\ce^{d_{\ell}\times d_{\ell}}$ such that for all $e_j^k$ 
		$$
		\widehat{Te_j^k}(\ell,m)=\sigma(\ell)_{mk}\delta_{j\ell}.
		$$
		\item[(C)]  If in addition, $e_j^k$ are in the domain of $T^*$ for all $j$ and $k$, then 
		for each $\ell\in\ene_0 $ there exists a matrix 
		$\sigma(\ell)\in\ce^{d_{\ell}\times d_{\ell}}$ such that
		\[\widehat{Tf}(\ell)=\sigma(\ell)\widehat{f}(\ell)\]
		for all $f\in\Hcal^{\infty}.$
	\end{itemize}
	
	The matrices $\sigma(\ell)$ in {\rm (B)} and {\rm (C)} coincide.
	
	The equivalent properties {\rm (A)--(C)} follow from the condition 
	\begin{itemize}
		\item[(D)] For each $j\in\ene_0$, we have
		$TP_j=P_jT$ on $\Hcal^{\infty}$.
	\end{itemize}
	If, in addition, $T$ extends to a bounded operator
	$T\in{\mathscr L}(\Hcal)$ then {\rm (D)} is equivalent to {\rm (A)--(C)}.
\end{thm}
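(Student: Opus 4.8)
The plan is to establish the cycle $(A)\Leftrightarrow(B)\Leftrightarrow(C)$ first and then treat $(D)$ separately, proving $(D)\Rightarrow(A)$ unconditionally and the reverse implication only under the boundedness hypothesis. Throughout I would use that each $e_j^k\in\Hcal^\infty$, so that $H_j\subset\Hcal^\infty$ and $T$ is defined on every block, and that any $f\in\Hcal$ is recovered from its Fourier coefficients through $f=\sum_{j,k}(e_j^k,f)\,e_j^k$, with $\widehat f(j,k)=(f,e_j^k)=\overline{(e_j^k,f)}$ in the antilinear-first convention fixed in Section~\ref{sec2}.

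For $(A)\Leftrightarrow(B)$ I would simply read off the Fourier coefficients of the vectors $Te_j^k$. Since $H_\ell={\rm span}\{e_\ell^m\}_m$, the membership $Te_j^k\in H_j$ is equivalent to the vanishing of $\widehat{Te_j^k}(\ell,m)=(Te_j^k,e_\ell^m)$ for every $\ell\neq j$. Setting $\sigma(\ell)_{mk}:=\widehat{Te_\ell^k}(\ell,m)=(Te_\ell^k,e_\ell^m)$ then turns $(A)$ into precisely the identity $\widehat{Te_j^k}(\ell,m)=\sigma(\ell)_{mk}\delta_{j\ell}$ of $(B)$, and conversely $(B)$ forces $Te_j^k\in H_j$. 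The implication $(C)\Rightarrow(B)$ is immediate on taking $f=e_j^k$: here $\widehat{e_j^k}(\ell)=\delta_{j\ell}\mathbf e_k$ (the $k$-th standard vector of $\ce^{d_\ell}$ when $\ell=j$, and $0$ otherwise), so $\widehat{Te_j^k}(\ell)=\sigma(\ell)\widehat{e_j^k}(\ell)=\delta_{j\ell}\sigma(\ell)\mathbf e_k$, whose $m$-th entry is $\sigma(\ell)_{mk}\delta_{j\ell}$. This computation also shows that the matrix $\sigma(\ell)$ appearing in $(B)$ and in $(C)$ is the same.

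The step I expect to be the crux is $(B)\Rightarrow(C)$, and it is exactly here that the extra hypothesis $e_\ell^m\in{\rm Dom}(T^*)$ enters. The difficulty is that $T$ may be unbounded, so one cannot justify $Tf=\sum_{j,k}(e_j^k,f)\,Te_j^k$ by passing $T$ through the infinite series. The remedy is to move the operator onto the fixed vector $e_\ell^m$ through the adjoint: for $f\in\Hcal^\infty$,
\[
\widehat{Tf}(\ell,m)=(Tf,e_\ell^m)=(f,T^*e_\ell^m)=\sum_{j,k}\overline{(e_j^k,f)}\,(e_j^k,T^*e_\ell^m).
\]
Since $T^*e_\ell^m$ is a single fixed element of $\Hcal$, continuity of the inner product legitimizes the termwise expansion. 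Now $(e_j^k,T^*e_\ell^m)=(Te_j^k,e_\ell^m)=\sigma(\ell)_{mk}\delta_{j\ell}$ by $(B)$, and $\overline{(e_j^k,f)}=\widehat f(j,k)$, so the sum collapses to $\sum_k\sigma(\ell)_{mk}\widehat f(\ell,k)$, which is the $m$-th component of $\sigma(\ell)\widehat f(\ell)$; hence $\widehat{Tf}(\ell)=\sigma(\ell)\widehat f(\ell)$, i.e.\ $(C)$.

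Finally, for $(D)\Rightarrow(A)$ I would take $h\in H_j\subset\Hcal^\infty$; since $P_jh=h$, condition $(D)$ gives $Th=TP_jh=P_jTh\in H_j$, which is $(A)$. For the converse under $T\in{\mathscr L}(\Hcal)$, I would observe that $(A)$ makes every $H_i$ invariant; because $T$ is continuous and each finite partial sum $\sum_{i\neq j}x_i$ of a vector $x\in H_j^\perp=\overline{\bigoplus_{i\neq j}H_i}$ is sent into the closed subspace $H_j^\perp$, it follows that $T(H_j^\perp)\subset H_j^\perp$ as well. With $\Hcal=H_j\oplus H_j^\perp$ and $T$ preserving both summands, the commutation $TP_j=P_jT$ follows by splitting an arbitrary vector, and restricting to $\Hcal^\infty$ yields $(D)$. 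I would close by noting that boundedness forces ${\rm Dom}(T^*)=\Hcal$, so the qualifying hypothesis in $(C)$ is then automatic and all four conditions are genuinely equivalent.
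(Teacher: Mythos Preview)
The paper does not give its own proof of this theorem; it is quoted verbatim from \cite{fjdmr:foum} (Theorem~2.1 there) and only the statement is recalled. So there is no in-paper argument to compare against.

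That said, your proposal is correct and is essentially the standard proof. The equivalences $(A)\Leftrightarrow(B)$ and $(C)\Rightarrow(B)$ are bookkeeping with Fourier coefficients, exactly as you wrote; the key step $(B)\Rightarrow(C)$ is handled properly by moving the operator onto the basis vector via $T^*$, which is precisely why the hypothesis $e_\ell^m\in{\rm Dom}(T^*)$ is needed, and your termwise expansion of $(f,T^*e_\ell^m)$ is legitimate because $T^*e_\ell^m$ is a fixed vector in $\Hcal$. Your treatment of $(D)$ is also correct: $(D)\Rightarrow(A)$ is immediate, and for the converse under boundedness you use continuity to pass from invariance of each $H_i$ to invariance of $H_j^\perp$, which yields the commutation with $P_j$. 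One minor remark: the paper uses two inner-product notations, $\langle\cdot,\cdot\rangle$ in Section~\ref{sec2} (antilinear in the first slot) and $(\cdot,\cdot)_\Hcal$ in Section~\ref{sec4} (where the expansion $f=\sum_{j,k}\widehat f(j,k)e_j^k$ with $\widehat f(j,k)=(f,e_j^k)_\Hcal$ forces linearity in the first slot). Your computations are internally consistent, but you may want to drop the parenthetical reference to the Section~\ref{sec2} convention and simply work with the Section~\ref{sec4} notation throughout, to avoid confusing the reader.
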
 

Under the assumptions of Theorem \ref{THM:inv-rem}, we have the direct sum 
decomposition
\begin{equation}\label{EQ:sum}
\Hcal = \bigoplus_{j=0}^{\infty} H_{j},\quad H_{j}={\rm span} \{e_{j}^{k}\}_{k=1}^{d_{j}},
\end{equation}
and we have $d_{j}=\dim H_{j}.$
The two applications that we will consider will be with $\Hcal=L^{2}(M)$ for a
compact manifold $M$ with $H_{j}$ being the eigenspaces of an elliptic 
pseudo-differential operator $E$, or with $\Hcal=L^{2}(G)$ for a compact Lie group
$G$ with $$H_{j}=\textrm{span}\{\xi_{km}\}_{1\leq k,m\leq d_{\xi}}$$ for a
unitary irreducible representation $\xi\in[\xi]\in\widehat{G}$. The difference
is that in the first case we will have that the eigenvalues of $E$ corresponding
to $H_{j}$'s are all distinct, while in the second case the eigenvalues of the Laplacian
on $G$ for which $H_{j}$'s are the eigenspaces, may coincide.

\begin{defn}\label{inva1}
In view of properties (A) and (C), respectively, an operator $T$ satisfying any of
the equivalent properties (A)--(C) in
Theorem \ref{THM:inv-rem}, will be called an {\em invariant operator}, or
a {\em Fourier multiplier relative to the decomposition
	$\{H_{j}\}_{j\in\N_{0}}$} in \eqref{EQ:sum}.
If the collection $\{H_{j}\}_{j\in\N_{0}}$
is fixed once and for all, we can just say that $T$ is {\em invariant}
or a {\em Fourier multiplier}.

The family of matrices $\sigma$ will be
called the {\em matrix symbol of $T$ relative to the partition $\{H_{j}\}$ and to the
	basis $\{e_{j}^{k}\}$.}
It is an element of the space $\Sigma$ defined by
\begin{equation}\label{EQ:Sigma1}
\Sigma=\{\sigma:\N_{0}\ni\ell\mapsto\sigma(\ell)\in \ce^{d_{\ell}\times d_{\ell}}\}.
\end{equation}
\end{defn}

For $f\in\Hcal$, in the notation of Theorem \ref{THM:inv-rem},
by definition we have
\begin{equation}\label{EQ:ser}
f=\sum_{j=0}^{\infty} \sum_{k=1}^{d_{j}} \widehat{f}(j,k) e_{j}^{k}
\end{equation}
with the convergence of the series in $\Hcal$.
Since $\{e^k_j\}_{j\geq 0}^{1\leq k\leq d_j}$ is a complete orthonormal 
system on  $\Hcal$, for all $f\in \Hcal$ we have the Plancherel formula
\beq \label{EQ:Plancherel}
\|f\|^2_{\Hcal}=\sum\limits_{j=0}^{\infty}\sum\limits_{k=1}^{d_j}|( f,e_j^k)|^2
=  \sum\limits_{j=0}^{\infty}\sum\limits_{k=1}^{d_j}|\widehat{f}(j,k)|^{2}
=\|\widehat{f}\|^{2}_{\ell^2(\N_{0},\Sigma)},
\eq
where we interpret $\widehat{f}\in\Sigma$ as an element of the space
\begin{equation}\label{EQ:aux3}
\ell^2(\N_{0,}\Sigma)=
\{h:\ene_0\rightarrow \prod\limits_d\ce^{d}: h(j)\in \ce^{d_j}\, \mbox{ and }\,\sum\limits_{j=0}^{\infty}\sum\limits_{k=1}^{d_j}|h(j,k)|^2<\infty\}, 
\end{equation}
and where we have written $h(j,k)=h(j)_k$. 
In other words, $\ell^2(\N_{0,}\Sigma)$ is the space of all $h\in\Sigma$ such that
$$
\sum\limits_{j=0}^{\infty}\sum\limits_{k=1}^{d_j}|h(j,k)|^2<\infty.
$$
We endow  $\ell^2(\N_{0},\Sigma)$ with the norm
\begin{equation}\label{EQ:aux4}
\|h\|_{\ell^2(\N_{0,}\Sigma)}:=\left(\sum\limits_{j=0}^{\infty}\sum\limits_{k=1}^{d_j}|h(j,k)|^2\right)^{\half}.
\end{equation}

We note that the matrix symbol $\sigma(\ell)$ depends 
not only on the partition \eqref{EQ:sum} but also
on the choice of the orthonormal basis.
Whenever necessary, we will indicate the dependance of $\sigma$ on the orthonormal 
basis by writing $(\sigma,\{e_j^k\}_{j\geq 0}^{1\leq k\leq d_j} )$ and we also will refer to 
$(\sigma,\{e_j^k\}_{j\geq 0}^{1\leq k\leq d_j} )$ as the {\em symbol} of $T$. 
Throughout this  section the orthonormal basis will be fixed and unless there is some 
risk of confusion the symbols will be denoted simply by $\sigma$.  
In the invariant language,  
we have that the transpose of the symbol,
$\sigma(j)^{\top}=T|_{H_{j}}$ is just the restriction of
$T$ to $H_{j}$, which is well defined in view of the property (A).

We will also sometimes 
write $T_{\sigma}$ to indicate that $T_{\sigma}$ is an operator corresponding to the 
symbol $\sigma $. It is clear from the definition that invariant operators are 
uniquely determined by their symbols. Indeed, if $T=0$ we obtain 
$\sigma=0$  for any choice of an orthonormal basis.  
Moreover, we note that by taking $j=\ell$ in (B) of Theorem \ref{THM:inv-rem} we obtain
the formula for the symbol:
\beq\label{symbinv}
\sigma(j)_{mk}=\widehat{Te_j^k}(j,m),
\eq
for all $1\leq k,m\leq d_j$. The formula (\ref{symbinv}) furnishes 
an explicit formula for the symbol in terms of the operator and the orthonormal basis. 
The definition of Fourier coefficients tells us that for invariant operators
we have 
\beq\label{symbinv2}
\sigma(j)_{mk}=({Te_j^k},e_j^m)_{H}.
\eq
In particular,  for the identity operator $T=I$ we have $\sigma_{I}(j)=I_{d_{j}}$,
where $I_{d_{j}}\in \C^{{d_{j}}\times {d_{j}}}$ is the identity matrix.

Let us now indicate a formula relating symbols 
with respect to different orthonormal bases. 
If $\{e_{\alpha}\}$ and $\{f_{\alpha}\}$ are orthonormal bases of 
$\Hcal$, we consider the unitary operator $U$ determined by 
$U(e_{\alpha})=f_{\alpha}$. Then we have
\[
(Te_{\alpha}, e_{\beta})_{\Hcal}=(UTe_{\alpha}, Ue_{\beta})_{\Hcal}
=(UTU^*Ue_{\alpha}, Ue_{\beta})_{\Hcal}
=(UTU^*f_{\alpha}, f_{\beta})_{\Hcal}.
\]
Thus, if $(\sigma_{T}, \{e_{\alpha}\})$ denotes the symbol of $T$ with respect to the 
orthonormal basis $\{e_{\alpha}\}$ and $(\sigma_{UTU^*}, \{f_{\alpha}\})$ 
denotes the symbol of $UTU^*$ with respect to the orthonormal basis $\{f_{\alpha}\}$ 
we have obtained the relation
\beq\label{difsymb} (\sigma_{T}, \{e_{\alpha}\})=({\sigma_{UTU^*}}, \{f_{\alpha}\}).\eq
Thus, the equivalence relation of bases $\{e_{\alpha}\}\sim  \{f_{\alpha}\}$ given by 
a unitary operator $U$ induces
the equivalence relation on the set $\Sigma$ of symbols given by 
\eqref{difsymb}. In view of this,
we can also think of the symbol being independent of a choice of basis, as an element of the space
$\Sigma/\sim$ with the equivalence relation given by
\eqref{difsymb}.

\section{ Symbols on Hilbert Tensor Products}\label{sec5}
We will now apply the above notion of global symbols to the setting of Hilbert tensor products. First we will see the decomposition of Hilbert tensors as direct sums so that we can apply our global symbols. Secondly, we deduce some  Schatten-von Neumann properties for tensors of invariant operators.  \\

The proof of the next lemma follows from Theorem \ref{prhdes}. 
\begin{lem}\label{lesh2x} Let $\Hi_1, \Hi_2$  be two complex separable Hilbert spaces decomposed as direct sums in the form
\[\Hi_1=\bigoplus\limits_{j=1}^{\infty}\Hi_{1,j}\, ,\,\, \Hi_2=\bigoplus\limits_{k=1}^{\infty}\Hi_{2,k}. \]
Then, the Hilbert space $\Hi_1\otimes\Hi_2$ can be written in the form 
\begin{equation}\label{lempart2h}
    \Hi_1\otimes\Hi_2=\bigoplus\limits_{j,\,k=1}^{\infty}(\Hi_{1,j}\otimes\Hi_{2,k})
    \end{equation}
\end{lem}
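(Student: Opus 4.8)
The plan is to build an orthonormal basis of $\Hi_1\otimes\Hi_2$ that is adapted to the two prescribed decompositions and then to read off the orthogonal direct sum structure from it by means of Theorem \ref{prhdes}. First I would choose, for each $j\geq 1$, an orthonormal basis $(e_m^j)_m$ of the subspace $\Hi_{1,j}$. Since $\Hi_1=\bigoplus_{j}\Hi_{1,j}$ is an orthogonal direct sum, the union $\{e_m^j\}_{j,m}$ is an orthonormal basis of $\Hi_1$. In the same way I would fix orthonormal bases $(f_n^k)_n$ of each $\Hi_{2,k}$, so that $\{f_n^k\}_{k,n}$ is an orthonormal basis of $\Hi_2$.

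By Theorem \ref{prhdes}, the family $\{e_m^j\otimes f_n^k\}$, indexed over all admissible $(j,m,k,n)$, is then an orthonormal basis of $\Hi_1\otimes\Hi_2$. The same theorem, applied now to the pair of subspaces $\Hi_{1,j}$ and $\Hi_{2,k}$, shows that for each fixed $(j,k)$ the subfamily $\{e_m^j\otimes f_n^k\}_{m,n}$ is an orthonormal basis of $\Hi_{1,j}\otimes\Hi_{2,k}$. Thus the orthonormal basis of the whole space partitions, according to the value of $(j,k)$, into the orthonormal bases of the summands $\Hi_{1,j}\otimes\Hi_{2,k}$.

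It then remains to record two things. First, each $\Hi_{1,j}\otimes\Hi_{2,k}$ sits inside $\Hi_1\otimes\Hi_2$ as a closed subspace via the natural map on elementary tensors; this map is an isometry because the inner product $\langle\phi\otimes\psi,\phi'\otimes\psi'\rangle=\langle\phi,\phi'\rangle\langle\psi,\psi'\rangle$ is the restriction of the one on $\Hi_1\otimes\Hi_2$, and its range is closed since the subspace is complete. Second, distinct summands are mutually orthogonal: for $(j,k)\neq(j',k')$ and elementary tensors $\phi\otimes\psi\in\Hi_{1,j}\otimes\Hi_{2,k}$, $\phi'\otimes\psi'\in\Hi_{1,j'}\otimes\Hi_{2,k'}$, the product $\langle\phi,\phi'\rangle\langle\psi,\psi'\rangle$ vanishes because at least one of $\Hi_{1,j}\perp\Hi_{1,j'}$ or $\Hi_{2,k}\perp\Hi_{2,k'}$ holds, and this orthogonality extends to the closed subspaces by sesquilinearity and continuity of the inner product. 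Since the union of the orthonormal bases of the pairwise orthogonal closed subspaces $\Hi_{1,j}\otimes\Hi_{2,k}$ is an orthonormal basis of $\Hi_1\otimes\Hi_2$, these subspaces span a dense subspace, and together with their mutual orthogonality this yields exactly the orthogonal direct sum decomposition \eqref{lempart2h}. The only genuinely delicate point, and the one I would treat with care, is the identification of the abstractly formed $\Hi_{1,j}\otimes\Hi_{2,k}$ with a subspace of $\Hi_1\otimes\Hi_2$; once the inner product is recognized as intrinsic, being given by the same formula on elementary tensors, this identification, and hence the whole statement, follows.
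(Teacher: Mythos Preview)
Your proof is correct and follows exactly the route the paper indicates: the paper simply states that the lemma follows from Theorem~\ref{prhdes}, and you have written out precisely how that theorem yields the decomposition by choosing adapted orthonormal bases and checking orthogonality and totality. There is nothing to add.
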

As a consequence we obtain formulae for the special case of Hilbert tensor of Fourier multipliers or invariant operators relative to fixed decompositions of the Hilbert factors in the sense of the Definition \ref{inva1}. This theorem shows that the notion of invariance herein considered is well behaved with respect to tensors. First we note that the dimension  of $\Hi_{1,j}\otimes\Hi_{2,k}$ is $d_jd_k$, and a typical element of the tensor of the corresponing orthonormal bases is of the form $e_{1,j}^p\otimes e_{2,k}^q$.
\begin{thm}\label{lesh2x2} Let $\Hi_1, \Hi_2$  be two complex separable Hilbert spaces decomposed as direct sums in the form
\[\Hi_1=\bigoplus\limits_{j=1}^{\infty}\Hi_{1,j}\, ,\,\, \Hi_2=\bigoplus\limits_{k=1}^{\infty}\Hi_{2,k}. \]

Let us consider two invariant operators 
\[\Op(\sigma_1):\Hi_1\rightarrow\Hi_1,\,\, \Op(\sigma_2):\Hi_2\rightarrow\Hi_2,\]
corresponding to symbols $\sigma_1, \sigma_2$ in the sense of Definition \ref{inva1}. Then, the corresponding tensor product of operators $\Op(\sigma_1)\otimes\Op(\sigma_2):\Hi_1\otimes\Hi_2\rightarrow \Hi_1\otimes\Hi_2$, is invariant with respect to the partition \eqref{lempart2h} and  we have
\[ \Op(\sigma_1)\otimes\Op(\sigma_2)=\Op(\sigma_1\otimes\sigma_2),\]
where 
\[(\sigma_1\otimes\sigma_2)(j,k)=\sigma_1(j)\otimes\sigma_2(k).\]
\end{thm}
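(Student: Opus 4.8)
The plan is to verify the claim by checking the defining relation of the symbol, namely the formula \eqref{symbinv2}, directly on the tensor product operator. First I would fix the orthonormal basis of $\Hi_1\otimes\Hi_2$ adapted to the partition \eqref{lempart2h}: by Theorem \ref{prhdes} and Lemma \ref{lesh2x}, the subspace $\Hi_{1,j}\otimes\Hi_{2,k}$ has the orthonormal basis $\{e_{1,j}^p\otimes e_{2,k}^q\}_{1\leq p\leq d_j,\,1\leq q\leq d_k}$, so its dimension is $d_jd_k$ and the relevant multi-index is $(p,q)$. With this basis fixed, the natural ordering of the index set $\N_0\times\N_0$ provides the enumeration of the partition needed to speak of a symbol in the sense of Definition \ref{inva1}.

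The first substantive step is to establish invariance, i.e. condition (A) of Theorem \ref{THM:inv-rem}, for $\Op(\sigma_1)\otimes\Op(\sigma_2)$ relative to \eqref{lempart2h}. This is immediate from the definition of the tensor of operators: since $\Op(\sigma_1)$ maps $\Hi_{1,j}$ into itself and $\Op(\sigma_2)$ maps $\Hi_{2,k}$ into itself (property (A) for each factor), we have
\[
(\Op(\sigma_1)\otimes\Op(\sigma_2))(\Hi_{1,j}\otimes\Hi_{2,k})\subset \Hi_{1,j}\otimes\Hi_{2,k},
\]
because $\Op(\sigma_1)\otimes\Op(\sigma_2)$ acts on elementary tensors by $\phi\otimes\psi\mapsto \Op(\sigma_1)\phi\otimes\Op(\sigma_2)\psi$ and extends linearly. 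Hence the tensor operator is invariant and possesses a well-defined matrix symbol on each block.

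The second step is to compute that symbol explicitly via \eqref{symbinv2}. The block symbol at $(j,k)$ is a $(d_jd_k)\times(d_jd_k)$ matrix whose entries are indexed by pairs $((p,q),(p',q'))$, and I would compute
\[
\begin{aligned}
\big((\Op(\sigma_1)\otimes\Op(\sigma_2))(e_{1,j}^{p'}\otimes e_{2,k}^{q'}),\, e_{1,j}^{p}\otimes e_{2,k}^{q}\big)
&=\big(\Op(\sigma_1)e_{1,j}^{p'}\otimes\Op(\sigma_2)e_{2,k}^{q'},\, e_{1,j}^{p}\otimes e_{2,k}^{q}\big)\\
&=\big(\Op(\sigma_1)e_{1,j}^{p'},\,e_{1,j}^{p}\big)\big(\Op(\sigma_2)e_{2,k}^{q'},\,e_{2,k}^{q}\big)\\
&=\sigma_1(j)_{pp'}\,\sigma_2(k)_{qq'},
\end{aligned}
\]
where the second equality uses the definition of the inner product on the tensor product and the third is precisely \eqref{symbinv2} applied to each factor. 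The resulting array $\sigma_1(j)_{pp'}\sigma_2(k)_{qq'}$ is, by definition of the Kronecker (tensor) product of matrices, the $((p,q),(p',q'))$-entry of $\sigma_1(j)\otimes\sigma_2(k)$. This identifies the block symbol as $(\sigma_1\otimes\sigma_2)(j,k)=\sigma_1(j)\otimes\sigma_2(k)$, as claimed.

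I do not anticipate a genuine obstacle here, since the argument is a direct bookkeeping computation once the adapted basis is fixed; the only point requiring care is the identification of the scalar array $\sigma_1(j)_{pp'}\sigma_2(k)_{qq'}$ with the matrix tensor product, which depends on using a consistent ordering convention for the pair index $(p,q)$ throughout, the same convention implicit in the definition $(\sigma_1\otimes\sigma_2)(j,k)=\sigma_1(j)\otimes\sigma_2(k)$. With that convention fixed, the equality of symbols follows, and since by the final remarks of Section \ref{sec4} an invariant operator is uniquely determined by its symbol, the operator identity $\Op(\sigma_1)\otimes\Op(\sigma_2)=\Op(\sigma_1\otimes\sigma_2)$ is established.
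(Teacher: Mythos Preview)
Your proof is correct and follows essentially the same route as the paper: first verify condition (A) of Theorem \ref{THM:inv-rem} using the invariance of each factor, then compute the block symbol via \eqref{symbinv2} by factoring the tensor inner product and identifying the resulting entries $\sigma_1(j)_{pp'}\sigma_2(k)_{qq'}$ with the Kronecker product $\sigma_1(j)\otimes\sigma_2(k)$. The paper's argument is the same computation with slightly different index names.
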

\begin{proof} The fact that $\Op(\sigma_1)\otimes\Op(\sigma_2)$ is invariant follows by verifying the Condition (A) in  Theorem \ref{THM:inv-rem}. Indeed, since 
\[\Hi_1=\bigoplus\limits_{j=1}^{\infty}\Hi_{1,j}\, ,\,\, \Hi_2=\bigoplus\limits_{k=1}^{\infty}\Hi_{2,k} \]
and $\Op(\sigma_1)(\Hi_{1,j})\subset \Hi_{1,j},\,\, $
$ \Op(\sigma_2)(\Hi_{2,j})\subset \Hi_{2,j} $. We obtain %
\begin{align*}
 \left(\Op(\sigma_1)\otimes\Op(\sigma_2) \right)(\Hi_{1,j}\otimes\Hi_{2,k})&\subset \Op(\sigma_1)(\Hi_{1,j})\otimes \Op(\sigma_2)(\Hi_{2,k}) \\
 &\subset \Hi_{1,j}\otimes\Hi_{2,k}.
\end{align*}%
This shows the invariance of $\Op(\sigma_1)\otimes\Op(\sigma_2).$ In order to obtain the symbol of $\Op(\sigma_1)\otimes\Op(\sigma_2)$, we first observe that for an element $e_{1,j}^p\otimes e_{2,k}^q$ of the orthonormal basis of $\Hi_1\otimes \Hi_2$ we have
\begin{equation}\label{prinrh1}
(\Op(\sigma_1)\otimes\Op(\sigma_2))(e_{1,j}^p\otimes e_{2,k}^q)=\Op(\sigma_1)(e_{1,j}^p)\otimes\Op(\sigma_2)(e_{2,k}^q).
\end{equation}
We now use the identity \eqref{symbinv2} and see that  
\begin{align*}
\left\langle \Op(\sigma_1)(e_{1,j}^p)\otimes\Op(\sigma_2)(e_{2,k}^q), 
 e_{1,j}^r\otimes e_{2,k}^s\right\rangle &=\left\langle \Op(\sigma_1)(e_{1,j}^p), e_{1,j}^r\right\rangle\left\langle \Op(\sigma_2)(e_{2,k}^q), e_{2,k}^s\right\rangle\\
 &=\sigma_1(j)_{(r,p)}\sigma_2(k)_{(s,q)}.
 \end{align*}
By writing $T=\Op(\sigma_1)\otimes\Op(\sigma_2)$, we have that the symbol $\sigma_T$ of $T$, is given by
\[\sigma_T(j,k)_{(r,s),(p,q)}=\sigma_1(j)_{(r,p)}\sigma_2(k)_{(s,q)}.\]
Therefore
\[(\sigma_1\otimes\sigma_2)(j,k)=\sigma_1(j)\otimes\sigma_2(k),\]
completing the proof.
\end{proof}
Theorem \ref{lesh2x2} can of course  be extended to the tensor of finitely many invariant operators. 
\begin{cor} \label{cortenx}  Let $\Hi_1, \Hi_2,\dots, \Hi_n$  be    complex separable Hilbert spaces 
 and assume each one can be decomposed as direct sums in the form
\[\Hi_i=\bigoplus\limits_{j=1}^{\infty}\Hi_{i,j}. \]
Let us consider invariant operators 
\[\Op(\sigma_i):\Hi_i\rightarrow\Hi_i, \]
with corresponding  symbols $\sigma_i$ for $i=1,2,\dots, n$ in the sense of Definition \ref{inva1}. Then, the corresponding tensor product of operators 
\[\bigotimes\limits_{i=1}^{n} \Op(\sigma_i):\bigotimes\limits_{i=1}^{n}\Hi_i\rightarrow \bigotimes\limits_{i=1}^{n}\Hi_i,\] is invariant with respect to the partition
\begin{equation}\label{part29k}
\bigotimes\limits_{i=1}^{n}\Hi_i= \bigoplus\limits_{j_1,j_2,\dots,j_n=1}^{\infty}(\Hi_{1,j_1}\otimes\Hi_{2,j_2}\otimes\cdots\otimes \Hi_{n,j_n}).
\end{equation}

The symbol of the tensor satisfies 
\[ \bigotimes\limits_{i=1}^{n}\Op(\sigma_i)=\Op\left(\bigotimes\limits_{i=1}^{n}\sigma_i\right),\]
where
\[\left(\bigotimes\limits_{i=1}^{n}\sigma_i\right)(j_1,j_2,\dots,j_n)=\bigotimes\limits_{i=1}^{n}\sigma_i(j_i).\]
 \end{cor}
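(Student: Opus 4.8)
The plan is to prove Corollary \ref{cortenx} by induction on $n$, using Theorem \ref{lesh2x2} as the base case ($n=2$) and the inductive step. First I would set up the induction hypothesis: suppose the statement holds for $n-1$ factors, so that $\bigotimes_{i=1}^{n-1}\Op(\sigma_i)$ is an invariant operator on $\bigotimes_{i=1}^{n-1}\Hi_i$ with respect to the partition \eqref{part29k} (for $n-1$ factors), and its symbol equals $\bigotimes_{i=1}^{n-1}\sigma_i$. The key associativity observation is that $\bigotimes_{i=1}^{n}\Hi_i \cong \left(\bigotimes_{i=1}^{n-1}\Hi_i\right)\otimes\Hi_n$ and correspondingly $\bigotimes_{i=1}^{n}\Op(\sigma_i)$ can be identified with $\left(\bigotimes_{i=1}^{n-1}\Op(\sigma_i)\right)\otimes\Op(\sigma_n)$. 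This reduces the $n$-factor case to a two-factor tensor of invariant operators, to which Theorem \ref{lesh2x2} applies directly.

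Next I would apply Theorem \ref{lesh2x2} with the first Hilbert space being $\bigotimes_{i=1}^{n-1}\Hi_i$, decomposed via the induction hypothesis as $\bigoplus_{j_1,\dots,j_{n-1}}(\Hi_{1,j_1}\otimes\cdots\otimes\Hi_{n-1,j_{n-1}})$, and the second being $\Hi_n=\bigoplus_{j_n}\Hi_{n,j_n}$. Theorem \ref{lesh2x2} then yields that the tensor is invariant with respect to the refined partition obtained by tensoring these two decompositions, which is precisely \eqref{part29k}, and that its symbol factorizes as $\left(\bigotimes_{i=1}^{n-1}\sigma_i\right)(j_1,\dots,j_{n-1})\otimes\sigma_n(j_n)$. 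Invoking the induction hypothesis to rewrite the first factor as $\bigotimes_{i=1}^{n-1}\sigma_i(j_i)$ gives the symbol
\[
\left(\bigotimes_{i=1}^{n-1}\sigma_i(j_i)\right)\otimes\sigma_n(j_n)=\bigotimes_{i=1}^{n}\sigma_i(j_i),
\]
as claimed.

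The main obstacle I anticipate is not the invariance or the symbol formula themselves, but the careful bookkeeping needed to justify the two isomorphisms on which the induction rests: first the associativity of the Hilbert tensor product, which identifies $\left(\bigotimes_{i=1}^{n-1}\Hi_i\right)\otimes\Hi_n$ with $\bigotimes_{i=1}^{n}\Hi_i$ (this is where Corollary \ref{prhdes33} guaranteeing that tensored orthonormal bases form an orthonormal basis is used), and second the compatible identification of the operators $\left(\bigotimes_{i=1}^{n-1}\Op(\sigma_i)\right)\otimes\Op(\sigma_n)$ with $\bigotimes_{i=1}^{n}\Op(\sigma_i)$ under this isomorphism. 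Once these identifications are made explicit, the tensor index $(j_1,\dots,j_{n-1})$ produced by the induction hypothesis must be matched correctly against the index structure of the two-factor application of Theorem \ref{lesh2x2}, so that the resulting partition coincides with \eqref{part29k} and the symbol indices align as $\sigma_i(j_i)$. Because both the norm multiplicativity (Corollary \ref{corsp}) and the symbol factorization are preserved under associativity, I expect the inductive step to go through cleanly once this indexing is handled, and I would note that the full statement then follows for all finite $n$.
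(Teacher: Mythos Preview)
Your proposal is correct and follows essentially the same approach as the paper: the paper's proof is a brief remark that the partition \eqref{part29k} follows by induction on Lemma \ref{lesh2x}, and that the invariance and symbol formula follow by induction applying Theorem \ref{lesh2x2}. Your write-up simply fleshes out that induction, including the associativity bookkeeping that the paper leaves implicit.
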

\begin{proof} The fact that \eqref{part29k} holds follows by induction on Lemma \ref{lesh2x}: The invariance of $\bigotimes\limits_{i=1}^{n} \Op(\sigma_i)$ with respect to such partition follows by induction applying Theorem \ref{lesh2x2} as well as the corresponding formula for the symbol. 
\end{proof}
We now give some consequences of Corollary \ref{corsp}, the corollary above and the Theorem  2.5 of \cite{fjdmr:foum} in the setting of Schatten-von Neumann classes.
\begin{cor} \label{invSchatten0} Let  $0<p<\infty$ and   $\Hi_j$  complex separable Hilbert spaces for $j=1, 2, \dots \, n$. If $A_j\in S_p(\Hi_j)$ are invariant  for $j=1, 2, \dots \, n$, then  
\[\bigotimes \limits_{j=1}^{n}A_j\in S_p\left(\bigotimes \limits_{j=1}^{n}\Hi_j\right).\]
Moreover
\[\|\bigotimes \limits_{j=1}^{n}A_j\|_{S_p\left(\bigotimes \limits_{j=1}^{n}\Hi_j\right)}=\prod\limits_{j=1}^{n}\|A_j\|_{S_p(\Hi_j)}=\prod\limits_{j=1}^{n}\left(\sum\limits_{\ell=1}^{\infty}\|\sigma_j(\ell)\|_{S_p(\Hi_{\ell})}^p \right)^{\frac{1}{p}}.\]
In the case of the  trace class $(p=1)$, we additionally have 
\begin{equation}
\Tr\left(\bigotimes \limits_{j=1}^n A_j\right)=\prod\limits_{j=1}^{n}\Tr(A_j)=\prod\limits_{j=1}^{n}\left(\sum\limits_{\ell=1}^{\infty}\Tr(\sigma_j(\ell) )\right).\label{trinf2h}\end{equation}	 
\end{cor}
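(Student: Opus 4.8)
The plan is to assemble the statement from three ingredients that are already at our disposal: Corollary~\ref{corsp}, which describes the behaviour of Schatten norms and traces under tensor products at the level of operators; Corollary~\ref{cortenx}, which guarantees that $\bigotimes_{j=1}^{n}A_j$ is again invariant with respect to the partition~\eqref{part29k} and identifies its symbol as $\bigotimes_{j=1}^{n}\sigma_j$; and Theorem~2.5 of \cite{fjdmr:foum}, which expresses the Schatten norm and the trace of a single invariant operator through the matrix blocks of its symbol.

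First I would feed the hypothesis directly into Corollary~\ref{corsp}(ii): since each $A_j\in S_p(\Hi_j)$, that corollary gives both the membership $\bigotimes_{j=1}^{n}A_j\in S_p(\bigotimes_{j=1}^{n}\Hi_j)$ and the multiplicativity $\|\bigotimes_{j=1}^{n}A_j\|_{S_p}=\prod_{j=1}^{n}\|A_j\|_{S_p}$, which is the first equality of the displayed chain. In the case $p=1$ the same corollary simultaneously yields $\Tr(\bigotimes_{j=1}^{n}A_j)=\prod_{j=1}^{n}\Tr(A_j)$, the first equality in~\eqref{trinf2h}.

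It then remains only to rewrite each individual factor in terms of the symbol. Applying Theorem~2.5 of \cite{fjdmr:foum} to the invariant operator $A_j$ with symbol $\sigma_j$ gives $\|A_j\|_{S_p(\Hi_j)}^{p}=\sum_{\ell=1}^{\infty}\|\sigma_j(\ell)\|_{S_p}^{p}$; taking $p$-th roots and substituting into the multiplicative formula produces the second equality, and for $p=1$ the trace part of the same theorem, $\Tr(A_j)=\sum_{\ell=1}^{\infty}\Tr(\sigma_j(\ell))$, closes~\eqref{trinf2h}. There is no genuine obstacle here: the argument is a bookkeeping combination of the cited results. The only point deserving a line of care is to confirm that the decomposition~\eqref{part29k} furnished by Corollary~\ref{cortenx} is exactly the one with respect to which the symbol formulae of \cite{fjdmr:foum} are applied, so that the factorwise identities are consistent with computing the norm of the whole tensor directly from its symbol $\bigotimes_{j=1}^{n}\sigma_j$; since the displayed formulae are stated factor by factor, this compatibility is automatic and one merely matches the Schatten indices of the matrix blocks.
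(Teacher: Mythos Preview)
Your proposal is correct and follows exactly the route the paper indicates: the corollary is stated there as an immediate consequence of Corollary~\ref{corsp}, Corollary~\ref{cortenx}, and Theorem~2.5 of \cite{fjdmr:foum}, and you have simply spelled out how these three ingredients fit together. There is nothing to add.
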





As an example we derive formulae for Schatten-von Neumann norms for tensors of negative powers $(I-\mathcal{L}_{\SU2})^{-\frac{\alpha}{2}}$ of the Laplacian on $\SU2$. By applying  Corollary \ref{invSchatten0} above and Corollary 4.5 of \cite{dr13:schatten}, we have: %

\begin{cor}\label{COR:su2-Lap}%
Let $0<p<\infty$ and  $\alpha,\, \beta> \frac{3}{p}$ . Then, the tensor 
\[(I-\lapsu2)^{-\frac{\alpha}{2}}\otimes (I-\lapsu2)^{-\frac{\beta}{2}}\]
is invariat and belongs to $S_p(L^2(\SU2\times\SU2))$.\\

Moreover, the norm of the tensor $(I-\lapsu2)^{-\frac{\alpha}{2}}\otimes (I-\lapsu2)^{-\frac{\beta}{2}}$ satisfies
\begin{align*}
\|(I-\lapsu2)^{-\frac{\alpha}{2}}\otimes (I-\lapsu2)^{-\frac{\beta}{2}}\|_{S_p}&=\|(I-\lapsu2)^{-\frac{\alpha}{2}}\|_{S_p}\|(I-\lapsu2)^{-\frac{\beta}{2}}\|_{S_p} 
\end{align*}
\[=\left( \sum\limits_{\ell\in\frac{1}{2}\ene_0}(2\ell+1)^2(1+\ell(\ell+1))^{-\frac{\alpha}{2}p} \sum\limits_{l\in\frac{1}{2}\ene_0}(2l+1)^2(1+l(l+1))^{-\frac{\beta}{2}p}\right)^{\frac{1}{p}}.\]

\end{cor}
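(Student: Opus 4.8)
The plan is to recognise each factor as an invariant operator (Fourier multiplier) relative to the Peter--Weyl partition of $L^2(\SU2)$, to apply the known Schatten norm formula for such negative powers of the Laplacian from \cite{dr13:schatten}, and then to feed the two factors into Corollary \ref{invSchatten0}. First I would set $\Hi=L^2(\SU2)$ and take the partition $\Hi=\bigoplus_{\ell}H_\ell$ in which $H_\ell=\mathrm{span}\{\xi_{km}\}_{1\le k,m\le d_\ell}$ is the span of the matrix coefficients of the representation labelled by $\ell\in\frac12\ene_0$; these are exactly the eigenspaces of $\lapsu2$, with $\dim H_\ell=d_\ell^2=(2\ell+1)^2$ and Laplacian eigenvalue $\ell(\ell+1)$. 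Since $(I-\lapsu2)^{-\frac{\alpha}{2}}$ is a function of $\lapsu2$, it preserves every $H_\ell$ and is therefore invariant in the sense of Definition \ref{inva1}, its symbol on the block $\ell$ being the scalar $(1+\ell(\ell+1))^{-\frac{\alpha}{2}}$ times the identity; the same holds for the $\beta$-power.

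Next I would invoke Corollary 4.5 of \cite{dr13:schatten}, which computes the Schatten norm of such powers. On $H_\ell$ the operator $(I-\lapsu2)^{-\frac{\alpha}{2}}$ has the single singular value $(1+\ell(\ell+1))^{-\frac{\alpha}{2}}$ with multiplicity $(2\ell+1)^2$, so
\[
\|(I-\lapsu2)^{-\frac{\alpha}{2}}\|_{S_p}^p=\sum_{\ell\in\frac12\ene_0}(2\ell+1)^2\bigl(1+\ell(\ell+1)\bigr)^{-\frac{\alpha}{2}p}.
\]
Since the summand decays like $\ell^{\,2-\alpha p}$, the series converges precisely when $2-\alpha p<-1$, i.e.\ $\alpha>\frac{3}{p}$; this is where the hypothesis is used, and likewise $\beta>\frac{3}{p}$ gives $(I-\lapsu2)^{-\frac{\beta}{2}}\in S_p(L^2(\SU2))$ with the analogous formula.

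Finally, with both factors invariant and in $S_p(L^2(\SU2))$, Corollary \ref{invSchatten0} (with $n=2$) yields at once that the tensor is invariant with respect to the induced partition \eqref{lempart2h}, that it lies in $S_p\bigl(L^2(\SU2)\otimes L^2(\SU2)\bigr)$, and that its Schatten norm factorises as the product of the two individual norms. Identifying $L^2(\SU2)\otimes L^2(\SU2)\cong L^2(\SU2\times\SU2)$ and substituting the two series above then produces the stated closed formula.

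The main obstacle is essentially bookkeeping rather than conceptual: one must correctly attach the multiplicity $(2\ell+1)^2=\dim H_\ell$ to each distinct eigenvalue (not merely the representation dimension $2\ell+1$) and verify that the resulting convergence threshold matches exactly the hypotheses $\alpha,\beta>\frac{3}{p}$; everything else is a direct application of the cited results.
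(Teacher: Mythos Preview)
Your proposal is correct and follows essentially the same approach as the paper: invoke Corollary~4.5 of \cite{dr13:schatten} to put each factor in $S_p(L^2(\SU2))$, apply Corollary~\ref{invSchatten0} to the tensor, and then pass to $L^2(\SU2\times\SU2)$ via the standard isomorphism. Your write-up is in fact more detailed than the paper's, as you spell out the multiplicity $(2\ell+1)^2$ and the convergence threshold explicitly.
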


\begin{proof} By  Corollary 4.5 of \cite{dr13:schatten} we have that the invariant operators 
$(I-\lapsu2)^{-\frac{\alpha}{2}}$ and $(I-\lapsu2)^{-\frac{\beta}{2}}$ belong to $S_p(L^2(\SU2))$. From Corollary \ref{invSchatten0} we have that the tensor product\\ $(I-\lapsu2)^{-\frac{\alpha}{2}}\otimes (I-\lapsu2)^{-\frac{\beta}{2}}$ is invariant and  belongs to $S_p(L^2(\SU2)\otimes 
 L^2(\SU2))$. \\
 
 Since $L^2(\SU2)\otimes L^2(\SU2)$ is isomorphic to $L^2(\SU2\times\SU2)$, 
 we conclude the proof of the first part.\\

On the other hand, the formula for the Schatten-von Neumann norm of each factor follows from the formulae for the global symbol of  $\lapsu2$ (see \cite{dr13:schatten}).     
\end{proof}
Similar conclusions may be drawn for negative powers of the sub-Laplacian and trace formulae.
Other examples can include a family of `Schr{\"o}dinger operators' on $\SO3$:
\[\mathcal H_{\gamma}=iD_3-\gamma(D_1^2+D_2^2),\]
for a parameter $0<\gamma<\infty,$ and where we give fix three invariant vector fields $D_1, D_2, D_3$ on $\SO3$ corresponding to the derivatives with
respect to the Euler angles. We refer to \cite[Chapter 11]{rt:book} for the explicit formulae.\\

In that case the matrix-symbol of $I+\mathcal H_{\gamma}$ is given by 
\begin{equation}\label{EQ:SO3-schor}
\sigma_{I+\mathcal H_{\gamma}}(\ell)_{mn}=(1+m-\gamma m^2+\gamma\ell(\ell+1))\delta_{mn},\quad m,n\in\mathbb Z,\;
-\ell\leq m,n\leq\ell,
\end{equation}
where as before $\delta_{mn}$ is the Kronecker delta, and we let $m,n$ run from
$-\ell$ to $\ell$ rather than from $0$ to $2\ell+1$.\\

In this last part of the paper we deduce some consequences for a class of periodic pseudodifferential operators, or pseudodifferential operators on the flat torus. We will be mainly focused on those ones with symbols of the form $\sigma(x,j)=a(x)\beta(j)$ defined on $\Tn\times \Zn$. This type of operators has been of recent interest for computational purposes with neural networks  (cf. \cite{pnetw:ko}), where families of operators of this kind play an essential role. Some recent results regarding singular traces for pseudo-differential operators on the flat torus can also be found in  \cite{Piet:trace2}.\\

We will consider a family of symbols $\sigma_m$ of the form  $\sigma_m(x,j)=a_m(x)\beta_m(j)$, where $a_m:\T\rightarrow\er$ is a nonnegative measurable function for $m=1,\dots, N$; $\beta_m:\zet\rightarrow \er$ 
is a nonegative function for $m=1,\dots, N$ such that 
\[ (A1)\,\,\,\|a_m\|_{L^{\infty}(\T)}<\infty,\,\, \sum\limits_{j\in\zet}\beta_m(j)^p<\infty\]
  for all $1<p<\infty$ and for all  $m=1,\dots, N$.\\

In the following theorem, $\Tr_{\omega}$ denotes the Dixmier trace (cf.\cite{Connes}), and we give a sufficent condition for the existence of Dixmier trace for finite tensors of the operators above described.

\begin{thm} Let $A_m$ be pseudodifferential operators with symbol of the form $\sigma_m(x,j)=a_m(x)\beta_m(j)$ for for $m=1,\dots, N$. We assume that they satisfy $(A1)$ and that the limit
\beq\label{dix1} \lim\limits_{p\rightarrow 1^{+}}(p-1)\prod\limits_{m=1}^{N}\|a_m\|_{L^{\infty}(\T)}^p\|\beta_m\|_{\ell^{p}(\zet)}^p\eq
exists. Then, 
\beq \label{dix2}
\lim\limits_{p\rightarrow 1^{+}}(p-1)\Tr\left(\left(\bigotimes_{m=1}^N A_m\right)^p \right) 
\eq
exists, and 
\beq \label{dix3}\Tr_{\omega}\left(\bigotimes_{m=1}^N A_m\right)=\lim\limits_{p\rightarrow 1^{+}}(p-1)\Tr\left(\bigotimes_{m=1}^N A_m^p\right) .\eq
\end{thm}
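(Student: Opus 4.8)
\emph{Strategy.} The plan is to reduce the Dixmier trace of the tensor to a product of one-factor quantities and then to invoke the Tauberian (zeta-function) description of the Dixmier trace. The analytic backbone is the classical identity attached to the residue at $s=1$ of the zeta function $s\mapsto\Tr(T^s)$: for an operator $T$ in the weak trace ideal $\mathcal{L}^{1,\infty}(\Hi)$ with non-negative eigenvalues, whenever $\lim_{p\to 1^{+}}(p-1)\Tr(T^p)$ exists the operator $T$ is measurable and this limit coincides with $\Tr_\omega(T)$ (see \cite{Connes}, and the related analyses in \cite{cdc20,ckc20,cc20}). Thus the whole statement follows once I show that $\bigotimes_{m=1}^{N}A_m\in\mathcal{L}^{1,\infty}$ and that the limit in \eqref{dix2} exists.

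First I would record the purely algebraic reduction. By the tensor structure one has $\left(\bigotimes_{m=1}^{N}A_m\right)^{p}=\bigotimes_{m=1}^{N}A_m^{p}$ (immediate for integer powers and extended by functional calculus, using the positivity discussed below), so by the multiplicativity of the trace under Hilbert tensor products (Corollary \ref{corsp}, in the invariant form of Corollary \ref{invSchatten0}) we get
\[
\Tr\Big(\big(\textstyle\bigotimes_{m=1}^{N}A_m\big)^{p}\Big)=\prod_{m=1}^{N}\Tr(A_m^{p}).
\]
Hence \eqref{dix2} and the right-hand side of \eqref{dix3} are literally the same object, and everything is reduced to controlling the one-factor traces $\Tr(A_m^{p})$.

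Next I would analyse a single factor. A symbol $\sigma_m(x,j)=a_m(x)\beta_m(j)$ on $\T\times\zet$ corresponds to $A_m=M_{a_m}\Op(\beta_m)$, the composition of multiplication by $a_m$ with the Fourier multiplier $\Op(\beta_m)$, which is invariant with respect to the Fourier partition of $L^{2}(\T)$ in the sense of Theorem \ref{THM:inv-rem} (here each block is one-dimensional, so $\beta_m$ is a scalar symbol). The ideal property of the Schatten classes gives $\|A_m\|_{S_p}\le\|a_m\|_{L^{\infty}(\T)}\,\|\Op(\beta_m)\|_{S_p}=\|a_m\|_{L^{\infty}(\T)}\,\|\beta_m\|_{\ell^{p}(\zet)}$, which by $(A1)$ is finite for every $p>1$; hence $A_m\in S_p$ for all $p>1$. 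Moreover, since $a_m,\beta_m\ge 0$, the operator $A_m=M_{a_m}\Op(\beta_m)$ is similar, via $M_{a_m}^{1/2}$, to the non-negative operator $M_{a_m}^{1/2}\Op(\beta_m)M_{a_m}^{1/2}$, so its eigenvalues $\lambda_n(A_m)$ are non-negative and $\Tr(A_m^{p})=\sum_n\lambda_n(A_m)^{p}\ge 0$. Weyl's majorant inequality then yields, for $p\ge 1$,
\[
0\le \Tr(A_m^{p})=\sum_n\lambda_n(A_m)^{p}\le\sum_n s_n(A_m)^{p}=\|A_m\|_{S_p}^{p}\le\|a_m\|_{L^{\infty}(\T)}^{p}\|\beta_m\|_{\ell^{p}(\zet)}^{p}.
\]
Multiplying over $m$ gives $0\le (p-1)\Tr\big((\bigotimes_m A_m)^{p}\big)\le (p-1)\prod_m\|a_m\|_{L^{\infty}}^{p}\|\beta_m\|_{\ell^{p}}^{p}$, so the convergence of the majorant \eqref{dix1} keeps the left-hand side bounded as $p\to 1^{+}$; a Karamata-type Tauberian theorem then places $\bigotimes_m A_m$ in $\mathcal{L}^{1,\infty}$. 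Upgrading this boundedness to the existence of the limit \eqref{dix2}, and then applying the zeta/Dixmier identity of the first paragraph, identifies that limit with $\Tr_\omega(\bigotimes_m A_m)$, which is \eqref{dix3}.

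\emph{Main obstacle.} The delicate point is precisely the passage from the boundedness supplied by the convergent majorant \eqref{dix1} to the genuine existence of the limit \eqref{dix2}: domination by a convergent quantity is not by itself sufficient, so one must either produce a matching lower bound (equivalently, control the ratios $\Tr(A_m^{p})/(\|a_m\|_{L^{\infty}}^{p}\|\beta_m\|_{\ell^{p}}^{p})$ as $p\to 1^{+}$, whose factors lie in $[0,1]$) or verify the Tauberian hypotheses directly for the eigenvalue-counting function of $\bigotimes_m A_m$. A secondary subtlety is that the $A_m$ are not self-adjoint, so the Dixmier-trace/zeta-residue identity must be used in its Lidskii (eigenvalue) form rather than in the singular-value form; this is exactly why the similarity to a positive operator and Weyl's inequality are invoked above, rather than a direct Schatten-norm computation.
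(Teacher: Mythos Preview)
Your approach is essentially identical to the paper's: reduce $\bigl(\bigotimes_m A_m\bigr)^p$ to $\bigotimes_m A_m^p$ via positivity, use the multiplicativity of the trace on tensors to obtain $\prod_m \Tr(A_m^p)$, factor each $A_m=M_{a_m}\Op(\beta_m)$, bound $\|A_m\|_{S_p}\le\|a_m\|_{L^\infty(\T)}\|\beta_m\|_{\ell^p(\zet)}$, and finish with the Connes--Moscovici zeta-residue formula. You are in fact more careful than the paper on two points: the paper simply asserts that each $A_m$ is ``positive definite'' and writes $\Tr(A_m^p)=\|A_m\|_{S_p}^p$, whereas you justify non-negativity of the eigenvalues via the similarity with $M_{a_m}^{1/2}\Op(\beta_m)M_{a_m}^{1/2}$ and use Weyl's inequality instead of an equality; and you explicitly argue (via a Tauberian step) that the tensor lies in the weak trace ideal, which the paper does not address.

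The gap you isolate in your ``Main obstacle'' paragraph is present, unacknowledged, in the paper's own argument. The paper derives only the one-sided bound
\[
(p-1)\,\Tr\Bigl(\bigl(\textstyle\bigotimes_{m} A_m\bigr)^{p}\Bigr)\;\le\;(p-1)\prod_{m}\|a_m\|_{L^\infty(\T)}^{p}\|\beta_m\|_{\ell^p(\zet)}^{p}
\]
and then declares that convergence of the right-hand side \eqref{dix1} forces the existence of the limit \eqref{dix2}. As you correctly point out, a convergent majorant yields boundedness, not convergence; neither the paper nor your sketch supplies the matching lower bound (or a direct Tauberian verification for the eigenvalue counting function) needed to close this step. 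So your proposal reproduces the paper's proof and, in addition, accurately identifies its weak link.
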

\begin{proof} We first note that since each $A_m$ is a positive definite operator on $L^{2}(\T)$, then so is 
$\bigotimes_{m=1}^N A_m$ on $\bigotimes_{m=1}^N L^{2}(\T)$, and 
\[\left(\bigotimes_{m=1}^N A_m\right)^p=\bigotimes_{m=1}^N A_m^p,\]
for all $1<p<\infty$. \\

We now note that, $A_m^p$ is a trace class operator for all $1<p<\infty$. Indeed, since $A_m$ is a positive definite operator belonging to the Schatten-von Neumann class $S_p$ as we will see below and 
\[\|A_m^p\|_{S_1}=\|A_m\|_{S_p}^{p}. \]
The operator $A_m$ can be written as a composition of a multiplication operator that is bounded with a Fourier multiplier that is trace class since the symbol of $A_m$ is   $a_m(x)\beta_m(j)$. We observe that
\begin{eqnarray*}
\|A_m\|_{S_p}\leq &\|T_{a_m}\|_{\mathcal{L}(L^2(\T))}\|\beta_m(D)\|_{S_p}\\
 \leq& \|a_m\|_{L^{\infty}(\T)}\|\beta_m(D)\|_{S_p}\\
 =&  \|a_m\|_{L^{\infty}(\T)}\left(\sum\limits_{j\in\zet}\beta_m(j)^p\right)^{\frac{1}{p}}
\end{eqnarray*}
where we have denote by $T_{a_m}$ the multiplication operator associated to $a_m$ and  by $\|\cdot\|_{\mathcal{L}(L^2(\T))}$ the operator norm with respect to the $L^2(\T)$ norm. From this and the assumption in the theorem we get that  $A_m^p$ is a trace class operator for all $1<p<\infty$.

Then, we have 
\begin{eqnarray*} \Tr\left(\left(\bigotimes_{m=1}^N A_m\right)^p\right)=&\Tr\left(\bigotimes_{m=1}^N A_m^p\right)\\ 
    =& \prod\limits_{m=1}^{N}\Tr(A_m^p)\\
    =& \prod\limits_{m=1}^{N}\|A_m\|_{S_p}^{p}\\
    \leq &  \prod\limits_{m=1}^{N} \|a_m\|_{L^{\infty}(\T)}^p\sum\limits_{j\in\zet}\beta_m(j)^p.
\end{eqnarray*}
From this last inequality and \eqref{dix1}, we can conclude  that the limit  \eqref{dix1} exists and 
\eqref{dix3} follows from  the Connes-Moscovici formula (cf. Proposition 4, \cite{Connes}).
\end{proof}

\noindent {\bf{Declarations}}\\

\noindent $\bullet$ Our manuscript has not associated data.\\

\noindent $\bullet$ No conflict of interest/Competing interests



\begin{thebibliography}{LYLW20}



\bibitem[CDGR23]{CGDR:Hi}
D. Cardona, J. Delgado, B. Grajales, M. Ruzhansky.
\newblock {C}ontrol of the Cauchy problem on Hilbert spaces: A global approach via symbol criteria. {\em Comm. Pure Appl. Anal.}, 22, 3295-3329, 2023

\bibitem[CDC20]{cdc20} D. Cardona, C. Del Corral. The Dixmier trace and the non-commutative residue for multipliers on compact manifolds. In: Georgiev V., Ozawa T., Ruzhansky M., Wirth J. (eds) Advances in Harmonic Analysis and Partial Differential Equations. Trends in Mathematics. Birkh\"auser, Cham, (2020).

\bibitem[CKC20]{ckc20} Cardona, D., Kumar, V., Del Corral, C. Dixmier traces for discrete pseudo-differential operators, J. Pseudo-Differ. Oper. Appl., Vol. 11, 647--656, 2020.

\bibitem[CC20]{cc20}  Cardona, D., Del Corral, C. The Dixmier trace and the Wodzicki residue for pseudo-differential operators on compact manifolds, Rev. Integr. Temas. Mat., Vol. 38 (1), 67-79, 2020.


\bibitem[CDR21]{anh:cdr}
M.~Chatzakou, J.~Delgado, and M.~Ruzhansky.
\newblock {O}n a class of anharmonic oscillators.
\newblock {\em J. Math. Pures Appl.}, 153:1--29, 2021.

\bibitem[CO94]{Connes} Connes, A. 
\newblock {N}oncommutative Geometry.
\newblock {\em Academic Press.}, 1994.



\bibitem[DR14a]{dr13a:nuclp}
J.~Delgado and M.~Ruzhansky.
\newblock ${L}^p$-nuclearity, traces, and {G}rothendieck-{L}idskii formula on
  compact {L}ie groups.
\newblock {\em J. Math. Pures Appl.}, 102:153--172, 2014.

\bibitem[DR14b]{dr:suffkernel}
J.~Delgado and M.~Ruzhansky.
\newblock Schatten classes on compact manifolds: {K}ernel conditions.
\newblock {\em {J}. {F}unct. {A}nal.}, 267:772--798, 2014.

\bibitem[DR17]{dr13:schatten}
J.~Delgado and M.~Ruzhansky.
\newblock {S}chatten classes and traces on compact groups.
\newblock {\em Math. Res. Letters}, 24:979--1003, 2017.

\bibitem[DR18]{fjdmr:foum}
J.~Delgado and M.~Ruzhansky.
\newblock {F}ourier multipliers, symbols and nuclearity on compact manifolds.
\newblock {\em {J}. {A}nal. {M}ath.}, 135(2):757$–$800, 2018.

\bibitem[DR21]{dr:intsc}
J.~Delgado and M.~Ruzhansky.
\newblock {S}chatten-von {N}eumann classes of integral operators.
\newblock {\em J. Math. Pures Appl.}, 154: 1--29, 2021.

\bibitem[GG18]{gg:qq1}
F.~Genovese and S.~Gogioso.
\newblock {Q}uantum field theory in categorical quantum mechanics. proceedings
  14th international conference on quantum physics and logic.
\newblock {\em Electron. Proc. Theor. Comput. Sci.}, (266):349--366, 2018.

\bibitem[GG19]{gg:qq2}
F.~Genovese and S.~Gogioso.
\newblock {Q}uantum field theory in categorical quantum mechanics. proceedings
  15th international conference on quantum physics and logic.
\newblock {\em Electron. Proc. Theor. Comput. Sci.}, (287):163--177, 2019.

\bibitem[GS18]{mgas:sp}
M.~Gessner and A.~Smerzi.
\newblock {S}tatistical speed of quantum states: {G}eneralized quantum {F}isher
  information and {S}chatten speed.
\newblock {\em {P}hysical {R}eview {A}.}, 97:022109, 2018.

\bibitem[Gui72]{gui:hi}
A.~Guichardet.
\newblock {\em {S}ymmetric {H}ilbert Spaces and Related Topics. {L}ecture notes
  in mathematics}, volume 261.
\newblock Springer-Verlag Berlin Heidelberg, 1972.%

\bibitem[Gui66]{gui:1} A. Guichardet.  {\em  Produits tensoriels infinis et representations des relations d'anticommutation.} (French) Ann. Sci. Ecole Norm. Sup. (3) 83, 1–-52, 1966.%

\bibitem[LYLW20]{schit:cc1}
J.~Liu, H.~Yuan, X.-M. Lu, and X.~Wang.
\newblock {Q}uantum {F}isher information matrix and multiparameter estimation.
\newblock {\em J. Phys. A.}, 53(2), 2020.

\bibitem[MuN36]{Murray} F. J. Murray and J. von Neumann. On rings of operators. {\em Ann. of Math.} (2) 37 (1936), no. 1, 116–229.


\bibitem[Par92]{Pah:book}
R.~K. Parthasarathy.
\newblock {\em {A}n introduction to quantum stochastic calculus. {M}onographs
  in {M}athematics}, volume~85.
\newblock Birk\"auser Verlag, Basel, 1992.

\bibitem[Piet15]{Piet:trace2}
Pietsch, A. Traces and Residues of Pseudo-Differential Operators on the Torus. Integr. Equ. Oper. Theory 83, 1–23, 2015.

\bibitem[Pru81]{prug:b1}
E.~Prugovecki.
\newblock {\em {Q}uantum mechanics in {H}ilbert Spaces: second edition}.
\newblock Academic Press, New York, 1981.

\bibitem[RS80]{r-s:vol1}
M.~Reed and B.~Simon.
\newblock {\em Methods of modern mathematical physics. {I}}.
\newblock Academic Press, Inc. [Harcourt Brace Jovanovich, Publishers], New
  York, second edition, 1980.
\newblock Functional analysis.

\bibitem[RT10a]{rt:book}
M.~Ruzhansky and V.~Turunen.
\newblock {\em Pseudo-differential operators and symmetries. Background
  analysis and advanced topics}, volume~2 of {\em Pseudo-Differential
  Operators. Theory and Applications}.
\newblock Birkh{\"a}user Verlag, Basel, 2010.


\bibitem[SLH2024]{pnetw:ko} Jin Young Shin, Jae Yong Lee and Hyung Ju Hwang. Pseudo-Differential Neural Operator: Generalized Fourier
{N}eural Operator for Learning Solution Operators of Partial
Differential Equations. {\em Transactions on Machine Learning Research}, 3, 2024


\bibitem[Sch43]{Sch}
  R. Schatten. On the direct product of Banach spaces. {\em Trans. Amer. Math. Soc.} 53 (1943), 195–217

\bibitem[Sch46]{Sch1}
  R. Schatten. The cross-space of linear transformations. {\em Ann. of Math.} (2) 47 (1946), 73–84.

   
\bibitem[SchNe46]{SN}
    R. Schatten and J. von Neumann. The cross-space of linear transformations. II. {\em Ann. of Math.} (2) 47 (1946), 608–630. 

\bibitem[Sch70]{Sch2}
 R. Schatten. Norm ideals of completely continuous operators. Second printing. Ergebnisse der Mathematik und ihrer Grenzgebiete, Band 27 Springer-Verlag, Berlin-New York 1970 vii+81 pp.

\bibitem[Ne39]{Ne}
 J. von Neumann. On infinite direct products. {\em Compositio Math}. 6 (1939), 1–77.

\bibitem[Wea01]{weav:book}
N.~Weaver.
\newblock {\em {M}athematical Quantization}.
\newblock Chapman \& Hall/ Crc, 2001.


\bibitem[Wei80]{weid:b1}
J.~Weidmann.
\newblock {\em {L}inear Operators in Hilbert Spaces}, volume~68.
\newblock Springer-Verlag, New York, 1980.

\end{thebibliography}

\end{document}